\crefname{hypothesis}{Hypothesis}{Hypotheses}
\title{Chemical relaxation oscillator designed to control molecular computation\thanks{Submmited to the editors DATE.\funding{This work was funded by the National Nature Science Foundation of China under Grant No. 12071428 and 62111530247, and the Zhejiang Provincial Natural Science Foundation of China under Grant No. LZ20A010002.}}}
\author{Xiaopeng Shi\thanks{School of Mathematical Science, Zhejiang University, Hangzhou, PR China
  (\email{12035033@zju.edu.cn}, \email{gaochou@zju.edu.cn}).}
\and Chuanhou Gao\footnotemark[2]
\and Denis Dochain\thanks{ICTEAM, UCLouvain, B ˆatiment Euler, avenue Georges Lemaˆıtre 4-6, 1348 Louvain-la-Neuve, Belgium(\email{denis.dochain@uclouvain.be}).}}
\begin{document}

\maketitle
\begin{abstract}
Embedding efficient calculation instructions into biochemical system has always been a research focus in synthetic biology. One of the key problems is how to sequence the chemical reaction modules that act as units of computation and make them alternate spontaneously. Our work takes the design of chemical clock signals as a solution and presents a $4$-dimensional chemical oscillator model based on relaxation oscillation to generate a pair of symmetric clock signals for two-module regulation. We give detailed dynamical analysis of the model and discuss how to control the period and occurrence order of clock signals. We also demonstrate the loop control of molecular computations and provide termination strategy for them. We can expect that our design for module regulation and loop termination will help advance the embedding of more complicate calculations into biochemical environments.
\end{abstract}

\begin{keywords}
chemical reaction network, synchronous sequential computation, chemical oscillator, relaxation oscillation, slow-fast system, slow manifold theorem
\end{keywords}

\begin{MSCcodes}
  37D05, 37N25, 34N05
\end{MSCcodes}

\section{Introduction}
A main objective of synthetic biology is designing programmable chemical controller which can operate in molecular contexts incompatible with traditional electronics \cite{del2016control,vasic2020}. We have learned plenty of algorithms from how real life works such as artificial neural network and genetic algorithm, while on the contrary, inserting advanced computational methods into living organisms to accomplish specific tasks e.g. biochemical sensing and drug delivery remains to be studied. A great deal of related work has sprung up in recent years: Moorman et al. \cite{moorman2019dynamical} proposed a biomolecular perceptron network in order to recognize patterns or classify cells \emph{in-vivo}. The beautiful work of Vasic et al. \cite{vasic2022programming} transformed the feed-forward RRelu neural network into chemical reaction networks (CRNs for short), and performed their model on standard machine learning training sets. There are also some attempts to build CRNs capable of learning \cite{blount2017feedforward,chiang2015reconfigurable}. However, few has implemented the whole neural network computation into a biochemical system. The main reason is that the algorithm based on computer instruction performs operations in a sequential manner whereas biochemical reactions proceed synchronously. This contradiction calls for an appropriate regulation method which isolates two reaction modules \cite{vasic2020} from co-occurring and controls their sequence. Blount et al. \cite{blount2017feedforward} constructed cell-like compartments and added electrical clock signals artificially in order to solve this problem, which increased the difficulty of biochemical implementation. A more natural idea is to design chemical oscillators which can automatically generate chemical species acting as periodical clock signals, taking advantage of the oscillatory changes of their concentration between high and low phases to turn corresponding reaction module on or off.

\par Oscillation phenomena are often encountered in chemical and biological systems e.g. Belousov-Zhabotinskii reaction \cite{tyson1980target} and circadian rhythm \cite{forger2017biological}, research and design of oscillators have been extensively studied in most aspects of industrial applications \cite{castro2004unique,kannapan2016synchronization,zhao2019orbital}, regulating reaction sequence by chemical oscillators is of course of interest. Arredondo and Lakin \cite{arredondo2022supervised} utilized a $20$-dimensional oscillator extended from ant colony model \cite{lachmann1995computationally} to order the parts of their chemical neural networks. Jiang et al. introduced a different oscillator model with $12$ species and $24$ reactions \cite{jiang2011}, then chose two of these species to serve as clock signals. These works follow the same logic: Firstly, find a suitable oscillator model with a set of appropriate selection of parameters and initial values, then confirm that the model is indeed available for use by simulation. There are two main drawbacks with such design. One is the lack of analysis on oscillation mechanism, i.e., it is often unclear why oscillatory behaviour emerges in these models. The other one is the initial concentration of these oscillators seems too strict to be realized in real chemical reaction system, e.g., Arredondo and Lakin demanded the initial concentration of some species equals to $10^{-6}$ while others is $1$ \cite{arredondo2022supervised}. In view of this, we summarize the requirements for species as clock signals and design a universal chemical oscillator model based on relaxation oscillation with clear dynamical analysis i.e. we can explain why the oscillation behaviour occurs and how it evolves, and make sure that the selection of initial values is broad. We also consider the effect of parameter selection on the oscillation properties and provide the period estimation of our clock signals. 

\par In this paper we mainly focus on designing chemical oscillators for the sequence execution of two chemical reaction modules. Sequence control and alternation of two reaction modules are very common in molecular operations and synthetic biology, such as module instructions that involve judgment statement before specific execution, and reaction modules that realize the loop of feed-forward transmission and back-propagation learning process in artificial neural networks. We not only provide a common approach of designing suitable chemical oscillator model for such requirements, but also offer strategy for spontaneous loop termination of reaction modules to be regulated. Our oscillator model can be transformed into abstract chemical reaction networks \cite{feinberg2019foundations} through appropriate kinetics assumption (mainly the \textit{mass-action kinetics}), and finally DNA strand displacement cascades \cite{soloveichik2010dna} or other technical means is used to implement the CRNs into real chemistry.

This paper is organized as follows.  Preliminaries and problem statements are given in \cref{sec:basic}. \Cref{sec:model} exhibits the structure of $4$-dimensional chemical relaxation oscillator based on $2$-dimensional relaxation oscillation, which is able to generate a pair of symmetric clock signals satisfying our requirements for module regulation. We also provide detailed dynamical analysis based on geometric singular perturbation theory on this model. In \cref{sec:fur} we discuss how to control the period and occurrence order of the oscillating species via adjusting oscillator parameters and initial values. Then we demonstrate the loop control of molecular computations and present termination strategy for them in \cref{sec:ter}. And finally, \cref{sec:conclusions} is dedicated to conclusion of the whole paper.

\section{Preliminaries and problem statement}
\label{sec:basic}

In this section we provide the preparatory knowledge on CRN \cite{feinberg2019foundations}, and further formulate the problem by a motivating example of how to construct a chemical oscillator to control the occurrence order of two reaction modules. We first give some notations. The sets of positive integers, real numbers, non-negative real numbers and positive real numbers are denoted by $\mathbb{Z}_{>0}, \mathbb{R}, \mathbb{R}_{\geq 0}$ and $\mathbb{R}_{>0}$, respectively. We use $\mathbb{R}^n$ to denote an $n$-dimensional Euclidean space, a vector $\alpha \in \mathbb{R}^n_{\geq0}$ if any component $\alpha_{i} \in \mathbb{R}_{\geq 0}$ with $i=1,2,...n$ and $\alpha \in \mathbb{R}^n_{>0}$ if $\alpha_{i} \in \mathbb{R}_{> 0}$.
\subsection{CRN}
    A CRN with the $j$th $(j=1,...,m)$ reaction following 
\begin{equation}
R_j:~~~~~a_{j1}S_{1}+\cdots +a_{jn}S_{n} \to  b_{j1}S_{1}+\cdots +b_{jn}S_{n}
\end{equation}    
 consists of three nonempty finite sets $\mathcal{S}$, $\mathcal{C}$ and $\mathcal{R}$, i.e.,
 \begin{itemize}
    \item species set $\mathcal{S}=\{S_1,...,S_n\}$;
    \item complex set $\mathcal{C}=\bigcup_{j=1}^m \{a_{j1}S_{1}+\cdots +a_{jn}S_{n}, b_{j1}S_{1}+\cdots +b_{jn}S_{n}\}$ with each element to be a linear combination of species over the non-negative integers;
    \item reaction set $\mathcal{R}=\{R_1,...,R_m\}$ with each element including two complexes connected by arrow, and the left complex of the arrow called reactant while the right one called product.
\end{itemize}

Denote the concentration of species $S_i$ by $s_i\in\mathbb{R}_{\geq 0}$, then the dynamics describing concentrations change of all species can be written as
\begin{equation}\label{dynamics1}
    \frac{\mathrm{d} s}{\mathrm{d} t} = \Xi \cdot r\left(s\right)\ ,
\end{equation}
where $s\in\mathbb{R}^n_{\geq 0}$ is the $n$-dimensional concentration vector, $\Xi\in\mathbb{Z}^{n\times m}$ is called the stoichiometric matrix with every entry defined by $\Xi_{ij} = b_{ij}-a_{ij}$, and $r(s)$ is the
$m$-dimensional vector-valued function evaluating the reaction rate. The most common model to specify the reaction rate is \textit{mass-action kinetics} that induces $r(s)$ by 
\begin{equation}\label{dynamics2}
r (s) = \left ( k_{1}\prod_{i=1}^{n}s_{i}^{a_{1i}},...,k_{m}\prod_{i=1}^{n}s_{i}^{a_{mi}}  \right )^{\top}
\end{equation}
with $k_{j}>0$ to represent the reaction rate constant of reaction $R_j$. The CRN equipped with \textit{mass-action kinetics} is called mass-action system, which is essentially a group of polynomial ODEs. In the context, we use this class of systems for the subsequent research. The following example gives an illustration of a mass-action system.


\begin{example}
For a reaction network taking the route
\begin{equation*}
    2S_{1} \overset{k_{1}}{\rightarrow} S_{2} + S_{3}\ ,\ \ \ 
    S_{3} \overset{k_{2}}{\rightarrow} 2S_{1}\ ,
\end{equation*}
\end{example}
the species set is $\mathcal{S}=\left \{ S_{1}, S_{2}, S_{3} \right \}$, complex set $\mathcal{C}=\left \{2S_{1}, S_{2}+S_{3},S_{3} \right \}$, stoichiometric matrix $\Xi_{3\times 2} = \begin{pmatrix}
-2&2\\ 
 1&0 \\ 
 1&-1 
\end{pmatrix}$, rate function $r\left (s \right ) = \left ( k_{1}s_{1}^{2}, k_{2}s_{3} \right )^{\top}$, and the corresponding ODEs are: 
\begin{subequations}
\begin{align}
 \frac{\mathrm{d} s_{1}}{\mathrm{d} t} &= -2k_{1}s_{1}^{2} + 2k_{2}s_{3}\ , \\
 \frac{\mathrm{d} s_{2}}{\mathrm{d} t} &= k_{1}s_{1}^{2}\ ,  \\ \frac{\mathrm{d} s_{3}}{\mathrm{d} t} &= k_{1}s_{1}^{2} - k_{2}s_{3}\ .
\end{align}
\end{subequations}

It has been proved that mass-action chemical kinetics is Turing universal \cite{fages2017strong}. This means that any computation can be embedded into a group of polynomial ODEs \cite{bournez2017polynomial}, and then realizing them with mass-action systems. In practice, this process is implemented by mapping the input of calculation into the initial concentrations of some species of the network and the output into the limiting value of other species, usually taking equilibrium. We present an example of ``addition calculation" to give readers more clear illustration \cite{vasic2020}.

\begin{example}\label{ex_gao1}
A CRN follows
\begin{align*}
       S_{1} &\to S_{1} + S_{2}\ , &   
        S_{3} &\to S_{3} + S_{2}\ ,  &           S_{2} &\to \varnothing  
    \end{align*}
 with all the reaction rate constants to be $1$ (In the context, when the reaction rate constant is $1$, we just omit it), where the last reaction refers to an outflow reaction. This network can serve for implementing addition calculation, like $a+b=c,~a,b,c\in\mathbb{R}$. To this task, we write the ODEs of the dynamics as 
\begin{align*}
\frac{\mathrm{d} s_{2}}{\mathrm{d} t} &= s_{1} + s_{3} - s_{2}\ , & 
\frac{\mathrm{d} s_{1}}{\mathrm{d} t} &=\frac{\mathrm{d} s_{3}}{\mathrm{d} t} = 0 
\end{align*}
with initial point vector to be $x(0)$. Clearly, when all reactions reach equilibrium, the equilibrium point vector $s^*$ satisfies $s_1^*=s_1(0)$, $s_2^*=s_1^*+s_3^*$ and $s_3^*=s_3(0)$. Therefore, by letting $s_1(0)=a$, $s_3(0)=b$, and $s_2^*=c$, we realize the addition calculation by this network. 
\end{example}

\subsection{Problem statement}\label{sec2.3}
 When implementing calculation using chemical reactions, the core difficulty is to deal with the contradiction between the sequential execution of calculation instructions and intrinsically parallel occurrence of chemical reactions. This is special true for those compound arithmetics, like loop calculation etc., in which the calculating procedures usually have definite priority. We consider the task to implement the frequently-used loop iteration calculation $s_1=s_1+1$ appearing in many machine learning algorithms through the following CRNs. 

 \begin{example}\label{ex2.2}
Given two reaction modules ($\mathcal{M}$) \begin{align*}
\mathcal{M}_1:
        S_{1} &\to S_{1} + S_{2}\ , &\mathcal{M}_2:  S_{2} &\to S_{1} + S_{2}\ , \\
        S_{3} &\to S_{3} + S_{2}\ ,  &   S_{1} &\to \varnothing\ , \\
        S_{2} &\to \varnothing \ ; 
    \end{align*}
the ODEs are 
\begin{align*}
\mathcal{M}_1:
\frac{\mathrm{d} s_{2}}{\mathrm{d} t} &= s_{1} + s_{3} - s_{2}\ , & 
\mathcal{M}_2: \frac{\mathrm{d} s_{1}}{\mathrm{d} t} &= s_{2} - s_{1}\ , \\
\frac{\mathrm{d} s_{1}}{\mathrm{d} t} &=\frac{\mathrm{d} s_{3}}{\mathrm{d} t} = 0\ ;  &  \frac{\mathrm{d} s_{2}}{\mathrm{d} t} &= 0\ .  
\end{align*}
It is easy to get their solutions to be 
\begin{subequations}\label{solution}
    \begin{align}
        \mathcal{M}_1&: s_{2}(t)=s_{1}(0)+s_{3}(0)-(s_1(0)+s_3(0)-s_2(0))e^{-t}, \\
        \mathcal{M}_2&: s_{1}(t)=s_{2}(0)-(s_2(0)-s_1(0))e^{-t}.
    \end{align}
\end{subequations}
$\mathcal{M}_1$ is actually the network given in Example \ref{ex_gao1}, called addition module, and $\mathcal{M}_2$ finishes the load task, called load module \cite{vasic2020}. When these two modules work independently, $\mathcal{M}_1$ can perform the calculation of $s_2^*=s_1^*+1$ by setting $s_3(0)=1$ while $\mathcal{M}_2$ realizes $s_1^*=s_2(0)=s_2^*$. Moreover, the expressions of solutions (\ref{solution}) imply that both of them converge to equilibrium exponentially. Therefore, alternation and loop of $\mathcal{M}_1$ and $\mathcal{M}_2$ could realize the desired loop iteration calculation $s_1=s_1+1$. However, if we directly put these two reaction modules together, there is strong coupling on the dynamics of $S_{1}$ and $S_{2}$, and their concentrations would increase to infinity in the absence of regulation, which fails to execute the calculation instruction. 
\end{example}

The above example suggests that we need to find a new tool to control strictly and alternatively the ``turn on" and ``turn off" of occurrence of two modules $\mathcal{M}_1$ and $\mathcal{M}_2$. A possible solution is to introduce chemical oscillator that produces periodical signals to control reactions. For this purpose, we modify $\mathcal{M}_1$ and $\mathcal{M}_2$ as follows.

\begin{example}\label{ex2.3}
The modified reaction modules are 
 \begin{align*}
    \tilde{\mathcal{M}}_{1}:
        S_{1} + U &\to S_{1} + S_{2} + U\ , & \tilde{\mathcal{M}}_{2}: S_{2} + V&\to S_{1} + S_{2} + V\ ,\\
        S_{3} + U&\to S_{3} + S_{2} + U\ , & S_{1} + V&\to V\ .\\
        S_{2} + U&\to U\ ; 
    \end{align*}
Here, we introduce two species $U$ and $V$ that are involved in reactions as catalysts. Their participation in reactions, on one hand, will not change themselves as reactions go on, and on the other hand, will not affect the dynamics of the other species appearing in the original modules, i.e., not interfering with the original calculation task of $\mathcal{M}_1$ and $\mathcal{M}_2$. The ODEs of dynamics of the whole network ($\tilde{\mathcal{M}}_{1}$ plus $ \tilde{\mathcal{M}}_{2}$) are expressed as 
\begin{subequations}\label{eq2.4}
    \begin{align}
        \frac{\mathrm{d} s_{1}}{\mathrm{d} t} &= (s_{2} - s_{1})v\ ,  \\
\frac{\mathrm{d} s_{2}}{\mathrm{d} t} &= (s_{1} + s_{3} -s_{2})u\ ,  \\
\frac{\mathrm{d} s_{3}}{\mathrm{d} t} &= 0\ .
    \end{align}
\end{subequations}
From the route, it can be concluded that whether species $U$/$V$ is existing will ``turn on" or ``turn off" $\tilde{\mathcal{M}}_{1}$/$\tilde{\mathcal{M}}_{2}$. Hence, as long as the concentrations of $U$ and $V$ are designed to be a pair of clock signals that oscillate with the same period, they will be able to generate ``loop" so as to control the execution sequence of $\tilde{\mathcal{M}}_{1}$ and $\tilde{\mathcal{M}}_{2}$, and finally to realize the loop iteration calculation $s_1=s_1+1$. \cref{fig1} gives a schematic diagram of a pair of standard clock signals to ``turn on" and ``turn off" alternatively and periodically two reaction modules. 

\begin{figure}[htbp]
  \centering
\includegraphics[width=1\linewidth,scale=1.00]{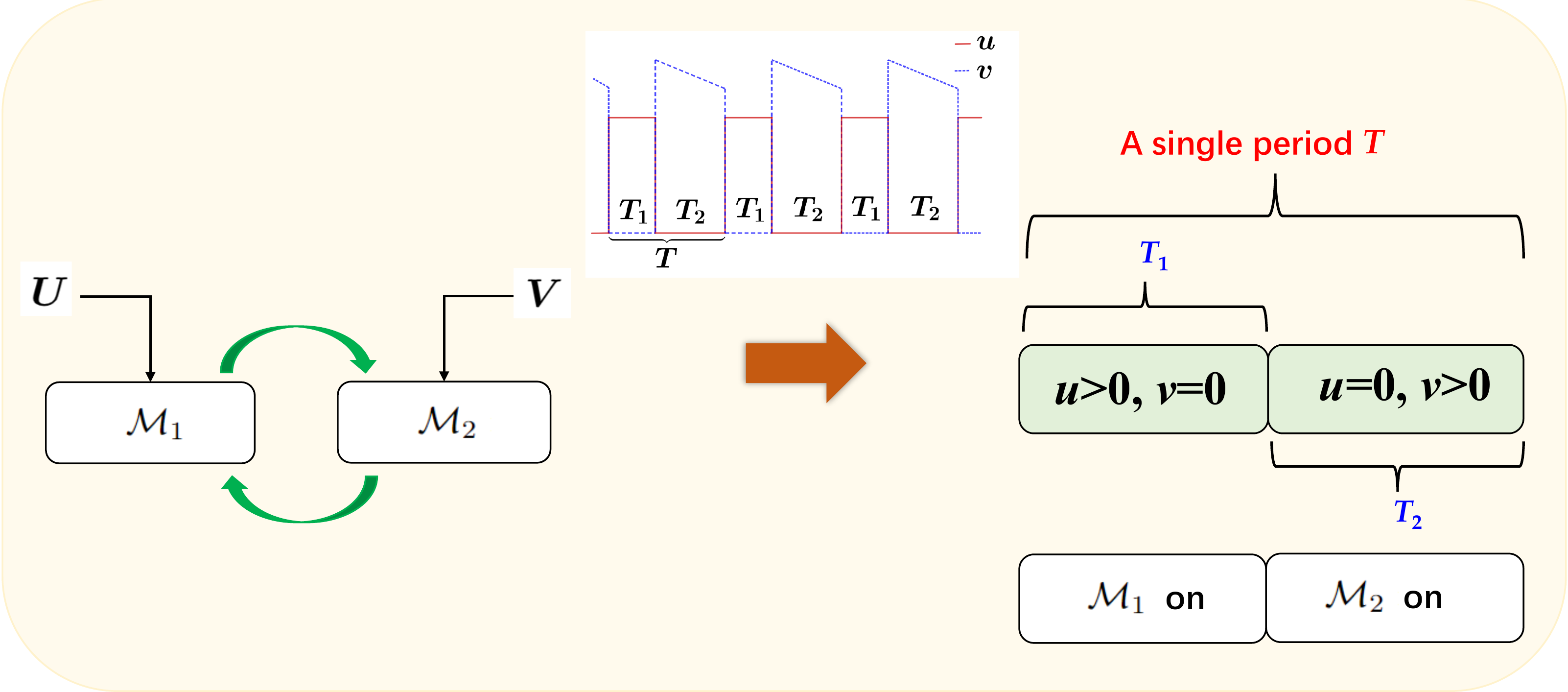}
  \caption{A schematic diagram of a pair of standard clock signals $U$ and $V$ for module regulation.}
  \label{fig1}
\end{figure}
\end{example}


Based on Example \ref{ex2.3} and the usual requirements for clock signals \cite{jiang2011}, we define ours for the current work, called symmetrical clock signals.

\begin{definition}[symmetrical clock signals]\label{def2.4}
    A pair of oscillatory species $U$ and $V$ are called symmetrical clock signals if 
    \begin{enumerate}
        \item $U$ and $V$ oscillate synchronously but with abrupt transitions between their phases;
        \item concentration of $U$ is strictly greater than 0 at high amplitude and approximately 0 at low amplitude, so is $V$;
        \item the amplitudes of $U$ and $V$ are complementary, i.e., when concentration of $U$ is at high amplitude, the one of $V$ is precisely at low amplitude, and vice versa.
    \end{enumerate}
\end{definition}
Note that the last two requirements are trivial and may be generated from any form of oscillation while the first one is not trivial, which serves for guaranteeing the accuracy of module regulation. This motivates us to consider relaxation oscillation \cite{fernandez2020symmetric,field1974oscillations,krupa2013network} as a basic oscillation structure to generate symmetrical clock signals. Thus the following task is on how to develop chemical relaxation oscillator towards controlling molecular computation.


 

\section{Chemical relaxation oscillator}\label{sec:model}
In this section, we introduce the mechanism of relaxation oscillation and develop $4$-dimensional chemical relaxation oscillator for the current task. 

\subsection{Mechanism of 2-dimensional relaxation oscillator}
Relaxation oscillation is a type of common oscillation in biochemical systems \cite{krupa2013network}, whose general form, as an example of $2$-dimensional case, is 
\begin{equation}\begin{aligned}\label{eq2.5}
 \epsilon \frac{\mathrm{d} x}{\mathrm{d} t}  &= f(x,y)\ , \\
\frac{\mathrm{d} y}{\mathrm{d} t}  &=g(x,y)\ ,
   & x \in \mathbb{R},\ y \in \mathbb{R},\ 0 < \epsilon \ll 1\ ,  \end{aligned}\end{equation}
where $f,g$ are $C^k$-functions with $k\geq3$, and $\mathscr{C}_0\triangleq\left \{(x,y):f(x,y)=0\right \}$ is the critical manifold.

\begin{definition}[normally hyperbolic manifold \cite{fenichel1979geometric}]\label{def3.6}
A manifold $\mathscr{C} \subseteq \mathscr{C}_{0}$ is normally hyperbolic if $\forall (x,y) \in \mathscr{C}$, $\frac{\partial f}{\partial x}(x,y) \neq 0$. Point with $\frac{\partial f}{\partial x}(x,y) =0$ is accordingly called non-hyperbolic point or fold point.
   Further, a normally hyperbolic manifold $\mathscr{C}$ is attracting if $\frac{\partial f}{\partial x}(x,y) < 0$ for $\forall (x,y) \in \mathscr{C}$ and is repelling if $\frac{\partial f}{\partial x}(x,y) > 0$.
\end{definition}

There have been a great deal of studies \cite{fernandez2020symmetric,krupa2001relaxation,grasman2012asymptotic,chuang1988asymptotic} on the dynamics of (\ref{eq2.5}), where we are rather concerned with those related to oscillation. When the critical manifold $\mathscr{C}_0$ is S-shaped, the function $y=\varphi(x)$ induced by $\mathscr{C}_0$ has precisely two critical points, a non-degenerate minimum $x_{m}$ and another non-degenerate maximum $x_{M}$ satisfying $x_{M}>x_{m}$, which together with $x_l: \varphi(x_l)=y_M=\varphi(x_M), \ x_r: \varphi(x_r)=y_m=\varphi(x_m)$ 
defines a singular trajectory $\Gamma_{0}$ by
\begin{equation}\begin{aligned}\label{eq_gao3}
 \Gamma_{0}&=\left \{(x,\varphi(x)):x_{l}<x\leq x_{m}\right \} \cup \left \{(x,y_{m}):x_{m}<x\leq x_{r}\right \} \\
 &\cup \left \{(x,\varphi(x)):x_{M}\leq x<x_{r}\right \} \cup \left \{(x,y_{M}):x_{l}\leq x<x_{M}\right \}\ .  \end{aligned}\end{equation}
For this class of systems, Krupa and Szmolyan \cite{krupa2001relaxation} gave a detailed geometric analysis of relaxation oscillation and further suggested a sufficient condition to the existence of a relaxation oscillation orbit $\Gamma_{\epsilon}$ lying in the $O(\epsilon)$-neighborhood of $\Gamma_{0}$. Here, we ignore the details about this condition, and recommend reader to refer to Theorem 2.1 in that paper for more information. Note that in the common model of relaxation oscillation, the oscillation orbit is often accompanied by an unstable equilibrium point on the repelling part of the critical manifold, we continue to use this constraint in our models.

\begin{remark}
The dynamics of (\ref{eq2.5}) serving as a chemical relaxation oscillator should satisfy the flowing three conditions:
\begin{itemize}
    \item [1.] it can generate a relaxation oscillation in the first quadrant;
    \item [2.] its detailed expression should match mass-action kinetics;
    \item [3.] the signals generated should be symmetrical according to Definition \ref{def2.4}.
\end{itemize}
\end{remark}

The van der Pol equation \cite{braaksma1993critical} is a typical instance of structure \cref{eq2.5} with ``S-shaped" manifold, which will act as a basis to design the needed oscillator. Instead of directly using it, we provide a coordinate-transformed version for the current purpose, written as
 \begin{subequations} \label{eq:gao2}
        \begin{align}
            \epsilon \frac{\mathrm{d} x}{\mathrm{d} t} &=-x^3+9x^2-24x+21-y \ ,\\
         \frac{\mathrm{d} y}{\mathrm{d} t} &=x-3 \ , ~~x,~y\in\mathbb{R}_{> 0}
         \ , ~~ 0 < \epsilon \ll 1\ .  
        \end{align}
    \end{subequations} 
Clearly, $(x^*,y^*)=(3,3)$ is its sole equilibrium and the singular trajectory is 
        \begin{equation}\label{Gamma_0}
        \begin{aligned}
\Gamma_{0}=\left\{ \left ( x,\varphi (x) \right ) :1 < x \leq 2 \right\} \cup \left\{ \left ( x,1  \right ):2 < x \leq 5\right\} \\
\cup \left\{ \left ( x,\varphi (x) \right ) : 4 \leq x < 5\right\} \cup \left\{ \left ( x,5 \right ): 1 \leq x < 4\right\}
    \end{aligned}
    \end{equation}
with $\varphi (x)=-x^3+9x^2-24x+21$. The phase plane portrait is presented in \cref{fig2}. As can be seen, the equilibrium is unstable and the relaxation oscillation orbit $\Gamma_{\epsilon}$ which lies in the $O(\epsilon)$-neighborhood of $\Gamma_0$ is also in the first quadrant. Hence, the flows starting from points in the first quadrant, except the unstable equilibrium, will soon converge to $\Gamma_{\epsilon}$ through horizontal motion. Note that the model of (\ref{eq:gao2}) does not match the expressions of the kinetic equations (\ref{dynamics1}) and (\ref{dynamics2}) for a certain mass-action system, since the terms $-y$ and $-3$ cannot reflect the consumption of species $X$ and $Y$, respectively. To fix this point and also avoid destroying the inherent dynamic property of (\ref{eq:gao2}), a naive idea is to multiply the first equation by $x$ while the second equation by $y$, which gives a modified version as follows. 

\begin{figure}[htbp]
  \centering
  \includegraphics[width=1.0\linewidth,scale=1.00]{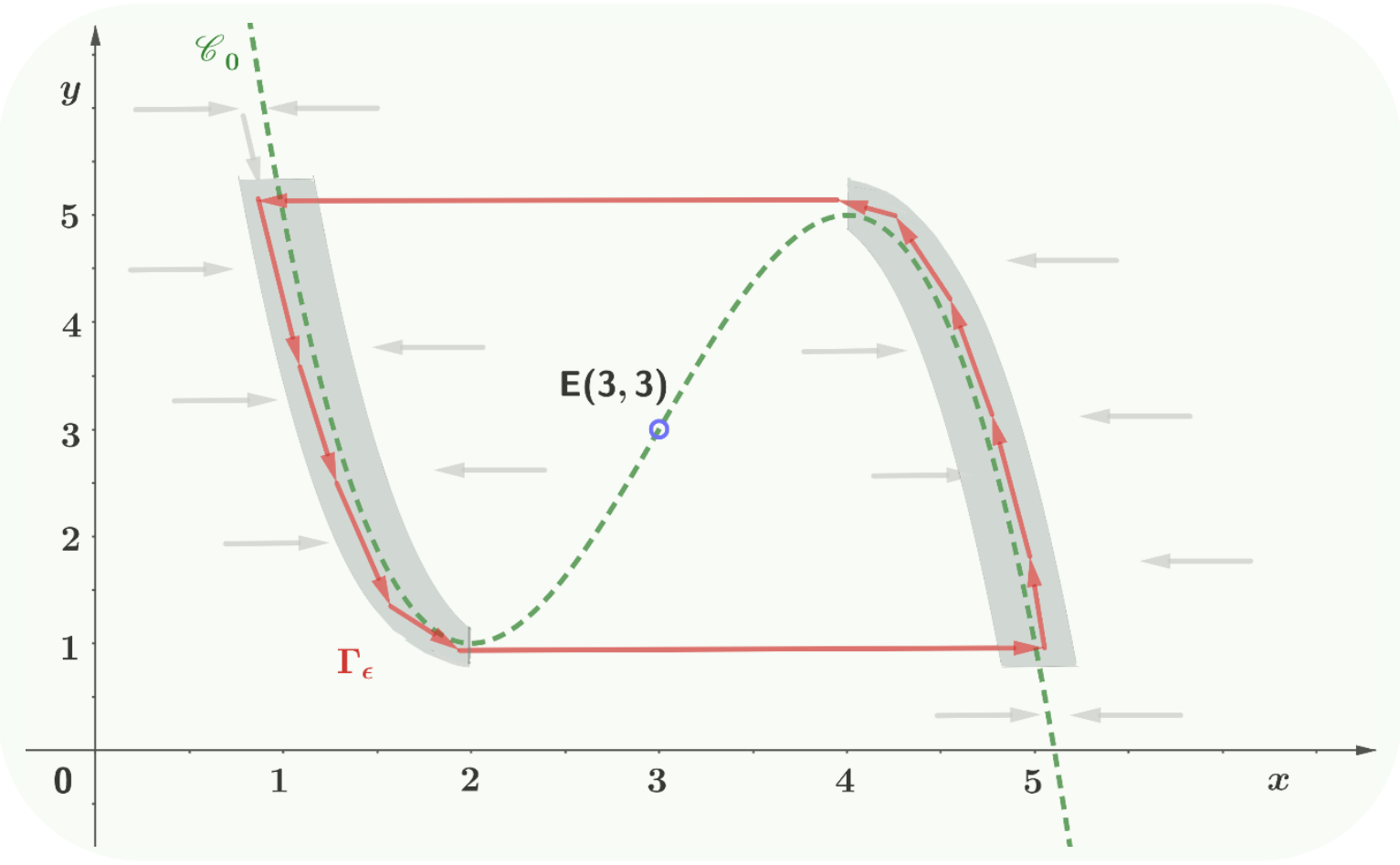}
  \caption{Phase plane portrait of the relaxation oscillation model \cref{eq:gao2}, where the green dotted broken curve represents the critical manifold $\mathscr{C}_0$, the shadow is $\epsilon$-neighbor of the corresponding $\mathscr{C}_0$ part, the rail enclosed by red broken lines approximates position of the relaxation oscillation orbit $\Gamma_{\epsilon}$, the black dotted arrows describe direction of the flows and $E(3,3)$ is the unique unstable equilibrium.}
  \label{fig2}
\end{figure}

\begin{example}[Modified van der Pol model]\label{ex2.6}
The modified van der Pol model is governed by
    \begin{subequations}\label{eq:gao4}
        \begin{align}
            \epsilon \frac{\mathrm{d} x}{\mathrm{d} t} &=(-x^3+9x^2-24x+21-y)x \ ,\\
         \frac{\mathrm{d} y}{\mathrm{d} t} &=(x-3)y \ ,
         ~~x,~y\in\mathbb{R}_{> 0}
         \ , ~~ 0 < \epsilon \ll 1\ .  
        \end{align}
    \end{subequations} 
Compared with (\ref{eq:gao2}), the current model adds $\left \{(x,y):x=0\right \}$ into the critical manifold and creates two saddle points on the axis. \Cref{fig3} displays the oscillating diagram of $x$ and $y$ by taking $(x_{0},y_{0})=(5,5)$ and $\epsilon=0.001$. Obviously, the corresponding species $X$ and $Y$ cannot directly play the role of a pair of symmetrical clock signals as \cref{def2.4} demands. However, the oscillatory species $X$ satisfies the requirement that $x$ has abrupt transitions between phases. We will utilize it and further design other structure to build a pair of symmetrical clock signals. 
\end{example}

\begin{figure}[htbp]
  \centering
  \includegraphics[width=1.0\linewidth,scale=1.00]{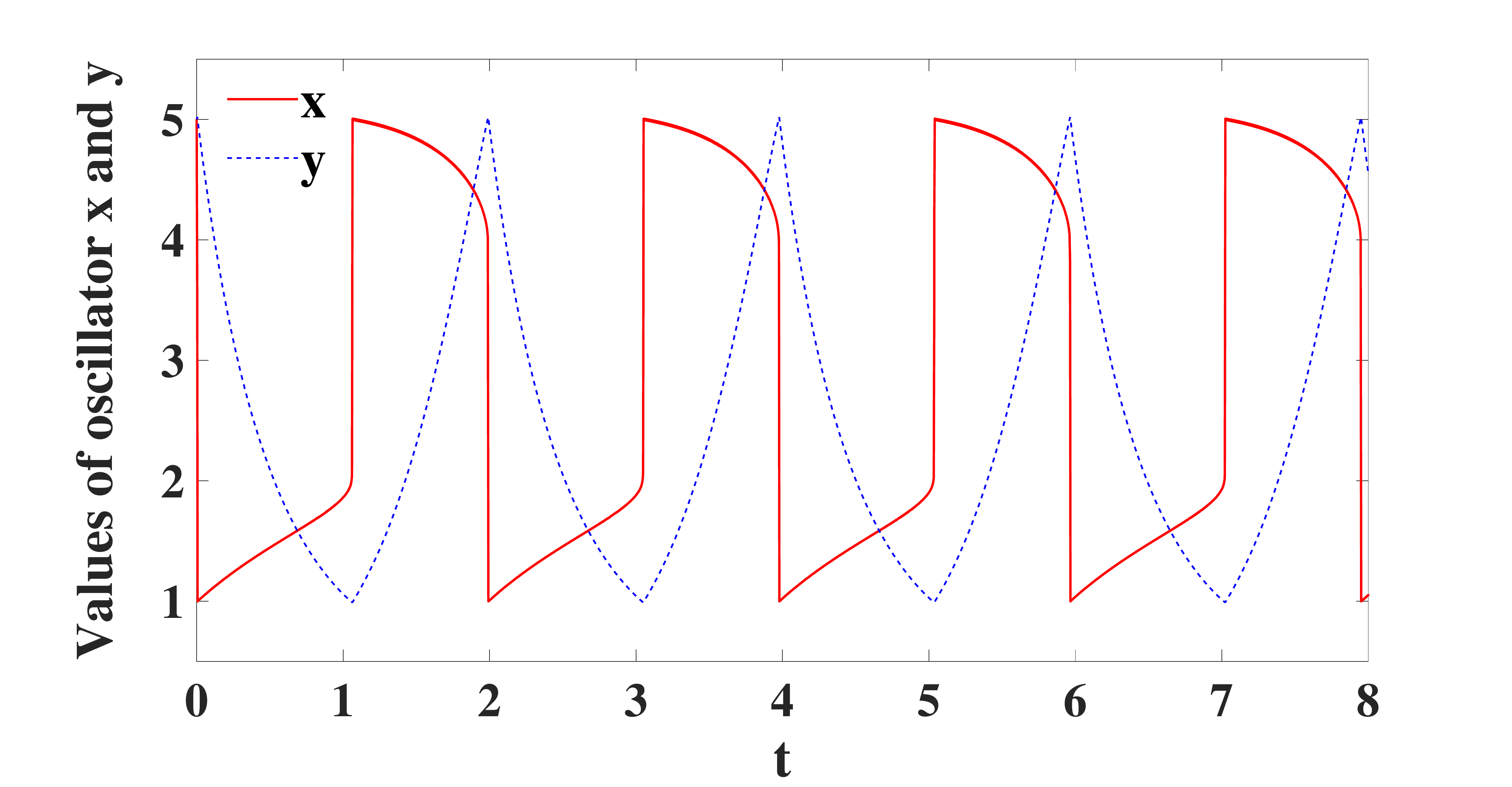}
  \caption{Diagram of oscillators $x$ and $y$ of the model (\ref{eq:gao4}) starting from $(5,5)$ when $\epsilon=0.001$.}
  \label{fig3}
\end{figure}

\subsection{Development of 4-dimensional chemical relaxation oscillator}
As suggested by Example \ref{ex2.6}, the species $X$ from a $2$-dimensional relaxation oscillator can act as a driving signal to produce symmetrical clock signals required in Definition \ref{def2.4}. However, note that this signal is not approximately $0$ at low amplitude, we thus introduce a ``subtraction operation" to pull down its low amplitude to $0$, i.e., considering the known truncated subtraction module \cite{vasic2020,buisman2009computing}
\begin{equation}
\begin{aligned}\label{subtraction}
P &\to P+U\ , & U &\to \varnothing\ , \\
X &\to X+V\ , &  U+V &\to \varnothing\ .
\end{aligned}
\end{equation}
This module may finish the task $u^*=p(0)-x(0)$ when $p(0)>x(0)$ or $u^*=0$ when $p(0)\leq x(0)$. Therefore, as long as the initial concentration of species $P$ is taken be less than that of species $X$, the equilibrium $u^*$ of species $U$ will be ``pulled down" to $0$, i.e., $U$ has the potential to be one of the symmetrical clock signals. Based on this module, we set up the species $X$ to follow the dynamics of (\ref{eq2.5}) exactly, and further modify it to be
\begin{equation}
\begin{aligned}\label{msubtraction}
P &\overset{\kappa}{\rightarrow} P+U\ ,\ \ \ U \overset{\kappa}{\rightarrow} \varnothing\ , \\
X &\overset{\kappa}{\rightarrow} X+V\ , \ \ \ V \overset{\kappa}{\rightarrow} \varnothing\ , \ \ \ U+V \overset{\kappa/\epsilon}{\rightarrow} \varnothing\ 
\end{aligned}
\end{equation}
with $\kappa \gg 1$. Apparently, the modifications include: i) a new outflow reaction $V \overset{\kappa}{\rightarrow} \varnothing$ is added; ii) the reaction rate constant of $U+V \overset{\kappa/\epsilon}{\rightarrow} \varnothing$ is set to be rather large compared to others; iii) the overall reaction rate constants have a significant increase in order of magnitude since $\kappa \gg 1$. We will give reasons of making these modifications during the subsequent analysis. 

By combining the dynamics for the driving signal $X$, i.e., (\ref{eq2.5}), and that of the mass-action system (\ref{msubtraction}), we get
\begin{subequations}\label{eq3.2}
    \begin{align}
        \epsilon_{1}\frac{\mathrm{d} x}{\mathrm{d} t} &= \eta_{1}f(x,y) \label{eq3.2a} \ , \\
         \frac{\mathrm{d} y}{\mathrm{d} t} &= \eta_{1}g(x,y)  \label{eq3.2b} \ ,\\
    \epsilon_{1}\epsilon_{2}\frac{\mathrm{d} u}{\mathrm{d} t} &= \eta_{1}(\epsilon_{1}(p-u)-uv)\label{eq3.2c}\ , \\
    \epsilon_{1}\epsilon_{2}\frac{\mathrm{d} v}{\mathrm{d} t} &= \eta_{1}(\epsilon_{1}(x-v)-uv)\label{eq3.2d}\ 
    \end{align}
\end{subequations}
with $0 < \epsilon_{1}, \epsilon_{2}=\eta_1/\kappa \ll 1$ and $\eta_{1}, p >0$. Note that we reshape the dynamics (\ref{eq2.5}) by multiplying $\eta_{1}$ with the main purpose of distinguishing the time scales of reaction rates between species $X,~Y$ and $U,~V$. The ODEs (\ref{eq3.2a}) and (\ref{eq3.2b}) will degenerate to (\ref{eq2.5}) if $\eta_1$ is modeled into $f(x,y)$ and $g(x,y)$. Here, $f(x,y)$ and $g(x,y)$ are assumed to guarantee the existence of relaxation oscillation \cite{krupa2001relaxation}, and moreover, the relaxation oscillation orbit $\Gamma_{\epsilon}$ lies strictly in the first quadrant along with a unique unstable equilibrium; $p$ is a constant representing the initial concentration of catalyst $P$; and $\eta_1/\epsilon_2\gg \eta_1$ ensures that $U$ and $V$ response quickly enough to the oscillator $X$ so that they can oscillate synchronously with $X$.  

\begin{remark}
 In the ODEs of (\ref{eq3.2c}) and (\ref{eq3.2d}), the very large reaction rate constant for $U+V \to \varnothing$ is set as $\eta_1/\epsilon_1$, which directly borrows the small parameter $\epsilon_1$ for the perturbed system (\ref{eq3.2a}) and (\ref{eq3.2b}). The main reason is only for the convenience of making dynamic analysis, but this is not necessary for developing chemical relaxation oscillator. 
\end{remark}



For this 4-dimensional oscillator model, i.e.,  describing the evolution of species set $\mathcal{S}=\left \{X,Y,U,V\right \}$, there are two time-scale parameters $\epsilon_{1}$ and $\epsilon_{2}$ that motivates us to analyze its dynamics using singular perturbation theory \cite{kuehn2015multiple}. Let $\alpha=(u,v)$, $\beta=(x,y)$, $F(\alpha, \beta)=(\eta_{1}(p-u-uv/\epsilon_{1}),\eta_{1}(x-v-uv/\epsilon_{1}))$ and $G(\alpha, \beta)=(\eta_{1}f(x,y)/\epsilon_{1},\eta_{1}g(x,y))$, then we define the corresponding slow-fast systems (labeled by $\sigma_{sl}$ and $\sigma_{fa}$, respectively) as follows  
\begin{subequations}\label{eq3.3}
    \begin{align}
    \sigma_{sl}&\triangleq\left\{(\alpha,\beta)\bigg| \epsilon_{2}\frac{\mathrm{d} \alpha}{\mathrm{d} t} = F(\alpha, \beta), ~\frac{\mathrm{d} \beta}{\mathrm{d} t} = G(\alpha, \beta)~\text{as}~\epsilon_{2} \to 0 \right\}\ ,\\
  \sigma_{fa}&\triangleq\left\{(\alpha,\beta)\bigg|    \frac{\mathrm{d} \alpha}{\mathrm{d} \tau} = F(\alpha, \beta),~\frac{\mathrm{d} \beta}{\mathrm{d} \tau} = \epsilon_{2}G(\alpha, \beta), ~\tau =t/\epsilon_{2}~\text{as}~\epsilon_{2} \to 0\right\} \ ,
    \end{align}
\end{subequations}
which are equivalent essentially. Further, we define their reduced version by setting $\epsilon_2=0$, i.e.,
\begin{subequations}\label{eq3.5}
    \begin{align}
    \sigma_{rsl}&\triangleq\left\{(\alpha,\beta)\bigg| 0 = F(\alpha, \beta), ~\frac{\mathrm{d} \beta}{\mathrm{d} t} = G(\alpha, \beta) \right\}\label{eq.3.5a}\ ,\\
  \sigma_{rfa}&\triangleq\left\{(\alpha,\beta)\bigg|    \frac{\mathrm{d} \alpha}{\mathrm{d} \tau} = F(\alpha, \beta),~\frac{\mathrm{d} \beta}{\mathrm{d} \tau} = 0, ~\tau =t/\epsilon_{2}\label{eq3.5b}\right\} \ .
    \end{align}
\end{subequations}
The flows generated from $\sigma_{rsl}$ and $ \sigma_{rfa}$ are called slow flow and fast flow, respectively. They will be utilized to approximate the flows of $\sigma_{sl}$ and $ \sigma_{fa}$ under the condition of sufficiently small $\epsilon_2$. The critical manifold $\mathscr{C}_0$, induced by $\sigma_{rsl}$ according to $\mathscr{C}_{0}=\left \{(\alpha, \beta): F(\alpha, \beta)=0 \right \}$, gives
\begin{equation}\label{eq3.7}
      \mathscr{C}_{0}:~~~~~ 
    \begin{aligned}
            \epsilon_{1}(p-u)-uv&=0 \ , \\
       \epsilon_{1}(x-v)-uv&=0 \ ,
    \end{aligned}
\end{equation}


\begin{lemma}\label{le3.7}
The critical manifold $\mathscr{C}_{0}$ given in \cref{eq3.7} is normally hyperbolic and attracting.
\end{lemma}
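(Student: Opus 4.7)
The plan is to compute the $2\times 2$ Jacobian of the fast vector field $F(\alpha,\beta)$ with respect to the fast variables $\alpha=(u,v)$ and show that, at every point of $\mathscr{C}_0$, both of its eigenvalues lie in the open left half-plane. In the multi-fast-variable setting, this is the natural generalization of \cref{def3.6}: normal hyperbolicity means the spectrum of $D_\alpha F\big|_{\mathscr{C}_0}$ avoids the imaginary axis, while attractivity requires that spectrum to lie strictly in the left half-plane.

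First I would differentiate $F_1=\eta_1(p-u-uv/\epsilon_1)$ and $F_2=\eta_1(x-v-uv/\epsilon_1)$ componentwise with respect to $u$ and $v$ to obtain
\begin{equation*}
D_\alpha F \;=\; -\eta_1\begin{pmatrix} 1+\dfrac{v}{\epsilon_1} & \dfrac{u}{\epsilon_1} \\[1.2ex] \dfrac{v}{\epsilon_1} & 1+\dfrac{u}{\epsilon_1} \end{pmatrix}.
\end{equation*}
Observe that the slow variables $\beta=(x,y)$ enter only implicitly through the coordinates of the base point on $\mathscr{C}_0$, so the analysis is uniform in $\beta$. Next I would compute
\begin{equation*}
\mathrm{tr}(D_\alpha F)=-\eta_1\left(2+\frac{u+v}{\epsilon_1}\right),\qquad \det(D_\alpha F)=\eta_1^2\left(1+\frac{u+v}{\epsilon_1}\right),
\end{equation*}
where the cross term $uv/\epsilon_1^2$ in the determinant cancels exactly.

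The crucial sign observation is that on $\mathscr{C}_0$ the defining equations give $uv=\epsilon_1(p-u)=\epsilon_1(x-v)\geq 0$, which together with positivity of $p$ and of $x$ (the dynamics of interest lives in the first quadrant) forces $u\in[0,p]$ and $v\in[0,x]$. Hence $\mathrm{tr}(D_\alpha F)<0$ and $\det(D_\alpha F)>0$ everywhere on $\mathscr{C}_0$. Since any real $2\times 2$ matrix with negative trace and positive determinant has both eigenvalues in the open left half-plane (and in particular both nonzero), $D_\alpha F\big|_{\mathscr{C}_0}$ is invertible and hyperbolic with purely attracting spectrum, which is exactly the claim.

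I do not expect a serious obstacle here; the argument is essentially a direct computation. The only point requiring care is justifying the sign bounds on $u$ and $v$ on the manifold, which I would handle by first restricting attention to the biologically meaningful orthant $\{u,v\geq 0\}$ and then using the two defining equations of $\mathscr{C}_0$ to confirm that no spurious branch with $u>p$ or $v>x$ enters the analysis. A brief remark connecting the $2\times 2$ version of \cref{def3.6} used here to its one-dimensional statement in the excerpt would round out the proof.
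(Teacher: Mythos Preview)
Your proposal is correct and follows essentially the same approach as the paper: compute $D_\alpha F$ and verify that its spectrum lies in the open left half-plane on $\mathscr{C}_0$. The only difference is cosmetic---the paper observes that the characteristic polynomial factors, giving the eigenvalues explicitly as $\lambda_1=-\eta_1$ and $\lambda_2=-\eta_1-\eta_1(u+v)/\epsilon_1$, so you could shorten your argument by noting that your trace and determinant satisfy $\mathrm{tr}=\lambda_1+\lambda_2$ and $\det=\lambda_1\lambda_2$ with these values, rendering the separate sign discussion of $u,v$ on $\mathscr{C}_0$ unnecessary beyond $u,v\geq 0$.
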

\begin{proof}
 In this case, $\frac{\partial F}{\partial \alpha}(\alpha, \beta)$ is a $2$-dimensional matrix, and the condition in \cref{def3.6} correspondingly becomes a constraint on eigenvalues \cite{fenichel1979geometric}. From the eigenvalues of $\frac{\partial F}{\partial \alpha}(\alpha, \beta)$, $\lambda_{1}=-\eta_{1}$ and $\lambda_{2}=-\eta_{1}-\eta_{1}(u+v)/\epsilon_{1}$, we have both of them to be less than $0$, so the result is true. 
\end{proof}

By applying the Fenichel Slow Manifold Theorem \cite{fenichel1979geometric} to the above $\mathscr{C}_0$, we have
\begin{remark}\label{slow manifold theorem}
There exists a slow manifold in the $O(\epsilon_2)$-neighborhood of $\mathscr{C}_0$, denoted by $\mathscr{C}_{\epsilon_2}$, satisfying that $\mathscr{C}_{\epsilon_2}$ is also attracting, and moreover, $\mathscr{C}_{\epsilon_2}$ is locally invariant under the flows of $\sigma_{sl}$, i.e., any flow of $\sigma_{sl}$ will remain in motion on the manifold once it enters the neighborhood of $\mathscr{C}_{\epsilon_2}$. $\mathscr{C}_{\epsilon_2}$ can therefore be treated as a perturbation of $\mathscr{C}_0$.
\end{remark}

We can depict the evolution of trajectory of slow-fast system \cref{eq3.3} more concretely through the following theorem, which also implies that the oscillating signals $U$ and $V$ can respond to the driving signal $X$ quickly enough due to the introduction of time scale $\epsilon_2$.  

\begin{theorem}\label{thm3.11}
For the slow-fast system \cref{eq3.3}, any of its trajectories originating from the area $\left \{(\alpha,\beta):\alpha \in \mathbb{R}^2_{\geq 0},\ \beta \in \mathbb{R}^2_{>0} \right \}$ will merge instantaneously into the slow manifold $\mathscr{C}_{\epsilon_2}$ approximately along the fast flow, and moreover, will not leave the manifold.
\end{theorem}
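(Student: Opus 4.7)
The plan is to combine the Fenichel slow manifold theorem (already invoked in Remark \ref{slow manifold theorem}) with a direct analysis of the reduced fast subsystem $\sigma_{rfa}$ in \cref{eq3.5b} to establish both claims: (i) the ``instantaneous merging'' along the fast flow, and (ii) the persistence on $\mathscr{C}_{\epsilon_2}$.

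First I would study the fast subsystem alone. Freezing $\beta=(x,y)$ as a parameter in $\sigma_{rfa}$ yields a planar ODE for $\alpha=(u,v)$. I would check that the closed first quadrant is forward-invariant under this flow by inspecting the boundary: on $\{u=0\}$ we have $\mathrm{d}u/\mathrm{d}\tau=\eta_{1}p>0$, and on $\{v=0\}$ we have $\mathrm{d}v/\mathrm{d}\tau=\eta_{1}x>0$ since $x\in\mathbb{R}_{>0}$. Next, I would appeal to the eigenvalue computation already carried out in the proof of \cref{le3.7}: the Jacobian $\partial F/\partial\alpha$ has eigenvalues $\lambda_{1}=-\eta_{1}$ and $\lambda_{2}=-\eta_{1}-\eta_{1}(u+v)/\epsilon_{1}$, both strictly negative on the quadrant and bounded away from zero. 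Hence each fiber $\{\beta=\text{const}\}$ carries a unique, uniformly exponentially attracting equilibrium of the fast flow, which is precisely the point of intersection of $\mathscr{C}_{0}$ with that fiber.

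I would then lift this fiberwise statement to the full slow-fast system. By the Fenichel theorem quoted in \cref{slow manifold theorem}, a perturbed slow manifold $\mathscr{C}_{\epsilon_2}$ exists, is $O(\epsilon_2)$-close to $\mathscr{C}_0$, is locally invariant under $\sigma_{sl}$, and inherits the uniform hyperbolic attractivity of $\mathscr{C}_0$. The uniform exponential attraction along fibers, together with compactness of the portion of $\mathscr{C}_0$ traversed by the oscillation (this portion is bounded because $\beta$ remains in the neighborhood of the relaxation orbit $\Gamma_{\epsilon_1}$ established in the previous subsection), implies that any initial datum with $\alpha\in\mathbb{R}^2_{\geq 0}$ and $\beta\in\mathbb{R}^2_{>0}$ lies in the basin of attraction of $\mathscr{C}_{\epsilon_2}$. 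Rewriting in the fast time $\tau=t/\epsilon_2$, the term $\epsilon_2 G(\alpha,\beta)$ governing $\beta$ is negligible during the initial transient, so the trajectory is $O(\epsilon_2)$-close to a fast fiber and converges to $\mathscr{C}_{\epsilon_2}$ within $O(1)$ units of $\tau$, i.e.\ within $O(\epsilon_2)$ of slow time $t$. This is what ``instantaneously along the fast flow'' means quantitatively.

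For the non-escape statement, once a trajectory has entered a sufficiently small tubular neighborhood of $\mathscr{C}_{\epsilon_2}$, the local invariance of $\mathscr{C}_{\epsilon_2}$ combined with the uniform normal attractivity ensures that the normal distance to $\mathscr{C}_{\epsilon_2}$ stays exponentially small for all subsequent time, provided $\beta$ does not leave the domain where $\mathscr{C}_0$ is normally hyperbolic. Since by \cref{le3.7} the entire relevant portion of $\mathscr{C}_0$ is normally hyperbolic and attracting (no fold points intervene), no escape from $\mathscr{C}_{\epsilon_2}$ can occur.

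The main obstacle I anticipate is the passage from the fiberwise fast-flow analysis to a global basin statement. Fenichel's theorem is intrinsically local, so one must verify that the \emph{entire} starting region $\{\alpha\in\mathbb{R}^2_{\geq 0},\ \beta\in\mathbb{R}^2_{>0}\}$ lies in the tubular neighborhood where the perturbation theory applies uniformly. The boundary $\{u=0\}\cup\{v=0\}$ requires extra care because Fenichel theory is usually stated on open sets; the forward-invariance check above removes this worry, but making the ``$O(\epsilon_2)$ slow-time transient'' statement fully rigorous requires a standard but somewhat technical Gronwall-type estimate comparing trajectories of $\sigma_{sl}$ with those of the frozen fast subsystem, which I would carry out in the detailed proof.
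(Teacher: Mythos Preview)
Your proposal is correct and follows essentially the same approach as the paper: use the negative eigenvalues of $\partial F/\partial\alpha$ established in \cref{le3.7} to show that fast flows of $\sigma_{rfa}$ are globally attracted to $\mathscr{C}_0$, and then invoke the Fenichel-based \cref{slow manifold theorem} for the existence, $O(\epsilon_2)$-closeness, and local invariance of $\mathscr{C}_{\epsilon_2}$ to conclude both the merging and the non-escape. Your version is in fact considerably more careful than the paper's own short argument---the boundary-invariance check, the quantitative reading of ``instantaneous'' as $O(\epsilon_2)$ slow time, and the acknowledgment that a Gronwall comparison is needed to bridge the frozen and full systems are all refinements the paper leaves implicit.
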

\begin{proof}
   The critical manifold $\mathscr{C}_{0}$ divides the area $\left \{(\alpha,\beta):\alpha \in \mathbb{R}^2_{\geq 0},\ \beta \in \mathbb{R}^2_{>0} \right \}$ into two parts as $F>0$ and $F<0$. Based on \cref{le3.7}, the two eigenvalues of $\frac{\partial F}{\partial \alpha}(\alpha, \beta)$ are always negative at any point in the mentioned area, so fast flows of \cref{eq3.5b} from both sides of $\mathscr{C}_{0}$ tend to travel towards $\mathscr{C}_{0}$, which approximates the instantaneous behavior of the slow-fast system \cref{eq3.3}. Therefore, the trajectory originating from the area will instantaneously converge towards $\mathscr{C}_{0}$ approximately along the fast flows. According to \cref{slow manifold theorem}, $\mathscr{C}_{\epsilon_2}$ lies in the $O(\epsilon_2)$-neighborhood of $\mathscr{C}_{0}$ and is locally invariant, which means the trajectory will finally merge into the slow manifold $\mathscr{C}_{\epsilon_2}$ and will not leave.
\end{proof}

\cref{thm3.11} means that the long-term dynamical behavior of the slow-fast system \cref{eq3.3} is fully decided by $\mathscr{C}_{\epsilon_{2}}$, which essentially results from no non-hyperbolic points on $\mathscr{C}_{0}$. And because $\mathscr{C}_{\epsilon_{2}}$ can be viewed as a perturbation of $\mathscr{C}_{0}$, we just need to pay attention on the dynamical behavior of $\mathscr{C}_{0}$. The following theorem gives an approximation to $\mathscr{C}_{0}$. 


\begin{theorem}\label{thm3.13}
For the critical manifold $\mathscr{C}_{0}$ shown in \cref{eq3.7}, if the initial concentration of catalyst $P$, $p$, is set to be between the high and low amplitude of the driving signal $X$, i.e., $\exists \delta  >0$ such that $x-p>\delta$ when $x$ is at the high amplitude and $p-x>\delta $ when $x$ is at the low amplitude, then the concentrations of oscillating signals $U$ and $V$ can be estimated as
    \begin{equation}\label{eq3.8}
\begin{cases}
u(x)= 0 \ ,\\
v(x)= x-p \ ,
\end{cases}
   \end{equation} 
   when $x$ is at the high amplitude, and
       \begin{equation}\label{eq3.9}
\begin{cases}
u(x)= p-x \ ,\\
v(x)= 0 \ ,
\end{cases}
   \end{equation} 
   when $x$ is at the low amplitude, with each of the estimation errors to be $O(\epsilon_{1})$.
\end{theorem}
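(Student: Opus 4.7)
The plan is to exploit the algebraic structure of the two equations defining $\mathscr{C}_{0}$ to reduce the problem to a single quadratic relation, then perform an asymptotic expansion in $\epsilon_{1}$ under the two amplitude regimes for $x$. By \cref{thm3.11}, any trajectory of interest lives on the slow manifold $\mathscr{C}_{\epsilon_{2}}$, which is an $O(\epsilon_{2})$ perturbation of $\mathscr{C}_{0}$ by \cref{slow manifold theorem}; since $\epsilon_{2}$ is taken as small as needed, I may compute on $\mathscr{C}_{0}$ itself and track only the $\epsilon_{1}$-error.

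The first step is to subtract the two defining equations of \cref{eq3.7}. Both $uv$ terms cancel, and after dividing by $\epsilon_{1}$ I obtain the affine constraint
\begin{equation*}
v - u = x - p.
\end{equation*}
Substituting $u = v - (x-p)$ into the second equation of $\mathscr{C}_{0}$ eliminates $u$ and yields the quadratic
\begin{equation*}
v^{2} - (x - p - \epsilon_{1})\, v - \epsilon_{1} x = 0.
\end{equation*}
Since $v$ is a concentration, I select the non-negative root
\begin{equation*}
v = \tfrac{1}{2}\Bigl((x-p-\epsilon_{1}) + \sqrt{(x-p-\epsilon_{1})^{2} + 4\epsilon_{1}x}\Bigr),
\end{equation*}
and recover $u$ through the affine relation. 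The symmetry of the system $(u,p)\leftrightarrow(v,x)$ shows that an analogous formula holds for $u$, so I only need to do one case carefully.

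Next, I split into the two amplitude regimes and perform the expansion. In the high-amplitude case $x - p > \delta$, the discriminant inside the square root equals $(x-p)^{2} + 2\epsilon_{1}(x+p) + \epsilon_{1}^{2}$, which is bounded below by $\delta^{2}$, so I may factor out $(x-p)$ and expand $\sqrt{1 + O(\epsilon_{1}/\delta^{2})}$ to first order; this gives $v = (x - p) + O(\epsilon_{1})$ and hence $u = v - (x-p) = O(\epsilon_{1})$, matching \cref{eq3.8}. The low-amplitude case $p - x > \delta$ is handled identically (after using the $(u,v)$-symmetry or taking the other branch if one sets up the quadratic in $u$ instead), producing \cref{eq3.9}. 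Finally, combining this $O(\epsilon_{1})$ algebraic error with the $O(\epsilon_{2})$ slow-manifold approximation from \cref{thm3.11} yields the stated overall bound since $\epsilon_{2}$ is absorbed into the dominant $\epsilon_{1}$ term.

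The main technical obstacle is justifying that the Taylor expansion of the square root is valid uniformly and that the remainder really is $O(\epsilon_{1})$ rather than $O(\epsilon_{1}/\delta)$ hiding a blow-up near the fold-like transition of $x$. The hypothesis $|x-p| > \delta$ is precisely what prevents the discriminant from degenerating, so the $\delta$-dependence goes into the implicit constant of the $O(\epsilon_{1})$ symbol. I also need to argue that the abrupt transitions of $x$ (which are $O(\epsilon_{1})$-fast in the driving $2$-d relaxation oscillator) do not spoil the estimate: during these short transitions the value of $x$ crosses the interval where $|x-p| \leq \delta$, but their duration is negligible compared to the relaxation sojourns on the outer branches, so the approximation \cref{eq3.8}--\cref{eq3.9} is valid throughout the parts of the orbit on which $x$ is quasi-stationary at high or low amplitude.
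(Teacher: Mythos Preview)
Your approach is essentially the same as the paper's: both reduce the defining equations of $\mathscr{C}_{0}$ to the identical quadratic $v^{2}-(x-p-\epsilon_{1})v-\epsilon_{1}x=0$ (and its $u$-analogue), select the non-negative root, and bound the deviation from the limiting value by $O(\epsilon_{1})$---the paper via explicit algebraic inequalities on the root formula, you via a Taylor expansion of the square root, which is equivalent. The discussion of $\epsilon_{2}$ and of the transition intervals in your final paragraph is unnecessary here, since \cref{thm3.13} is a purely algebraic statement about points on $\mathscr{C}_{0}$ for fixed $x$ in one of the two amplitude regimes; the passage from $\mathscr{C}_{0}$ to the actual trajectory is handled separately in \cref{thm:3.15}.
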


\begin{proof}
From \cref{eq3.7}, it is easily to get 
    \begin{subequations}\label{eq2.10}
        \begin{align}
        v^2+(p-x+\epsilon_{1})v-\epsilon_{1}x=0 \ , \\
        u^2-(p-x-\epsilon_{1})u-\epsilon_{1}p=0 \ . 
        \end{align}
    \end{subequations}
We firstly consider the case that $x$ is at the low amplitude, i.e. $\exists \delta  >0$ such that $p-x>\delta$. Let $0<\epsilon_{1} \ll \delta$, then $p-x \pm \epsilon_{1}>0$. Under the constraint of $u,v \geq 0$, the above equations may be solved as
        \begin{subequations}\label{eq3.12}
            \begin{align}
                  u(x)=&\frac{(p-x-\epsilon_{1})+ \sqrt{(p-x-\epsilon_{1})^{2}+4\epsilon_{1}p}}{2} \ , \\
            v(x)=&\frac{-(p-x+\epsilon_{1})+ \sqrt{(p-x+\epsilon_{1})^{2}+4\epsilon_{1}x}}{2} \ .
            \end{align}
        \end{subequations}
Hence, the errors of using (\ref{eq3.9}) to estimate them may be calculated as
        \begin{equation*}
            \begin{aligned}
                \left | u(x)-(p-x) \right |=&\left(\frac{2p}{\sqrt{(p-x-\epsilon_{1})^2+4\epsilon_{1}p}+(p-x-\epsilon_{1})}-1\right)\epsilon_{1} \\
                <&\left(\frac{p}{p-x-\epsilon_{1}}-1\right)\epsilon_{1} 
                <\left(\frac{p}{p-x-\delta}-1\right)\epsilon_{1} 
                =O(\epsilon_{1}) \ 
            \end{aligned}
        \end{equation*}
 and   
               \begin{equation*}
            \begin{aligned}
                \left | v(x)-0 \right |=&\frac{2\epsilon_{1}x}{\sqrt{(p-x+\epsilon_{1})^2+4\epsilon_{1}x}+(p-x+\epsilon_{1})} \\
                <&\frac{x}{p-x+\epsilon_{1}}\epsilon_{1} 
                <\frac{x}{p-x}\epsilon_{1} 
                =O(\epsilon_{1}) \ . 
            \end{aligned}
        \end{equation*}
For the case of $x$ at the high amplitude, the analysis may be performed based on the same logic, which completes the proof.
\end{proof}

\begin{remark}
This theorem indicates that the time-scale parameter $\epsilon_{1}$ appearing in the reaction rate constant of $U+V \overset{\eta_1/\epsilon_1}{\longrightarrow} \varnothing$ plays an important role in generating a pair of oscillating signals with symmetry as required by \cref{def2.4}. It ensures that on the critical manifold $\mathscr{C}_0$ given by (\ref{eq3.7}) there is always a species, either $U$ or $V$, whose concentration is close to $0$ no matter what the concentration of driving signal $X$ is, and furthermore, the approximation error is $O(\epsilon_1)$. This encouraging result also conversely accounts for that the modifications on greatly enlarging the reaction rate constant of $U+V \to \varnothing$ compared to others in (\ref{msubtraction}) is quite reasonable.

\end{remark}

The above two theorems explain how the flows of slow-fast system \cref{eq3.3} evolve towards the neighborhood of $\mathscr{C}_{0}$ and provide a more intuitive description about $\mathscr{C}_{0}$. Now, we can announce that the constructed chemical relaxation oscillator \cref{eq3.2} is qualified to produce symmetrical clock signals as required.

\begin{theorem}\label{thm:3.15}
For a $4$-dimensional system (\ref{eq3.2}) describing the concentrations evolution of species set $\mathcal{S}=\left \{X,Y,U,V\right \}$, assume the initial concentration $p$ of catalyst $P$ to be taken as \cref{thm3.13} claims, and the positive initial concentration point $(x(0),y(0),u(0),v(0))$ of the system to satisfy that $(x(0),y(0))$ is not the unique unstable equilibrium of the subsystem $\Sigma_{xy}$ (governed only by (\ref{eq3.2a}) and (\ref{eq3.2b})). Then species $U$ and $V$ can act as a pair of symmetrical clock signals as requested by \cref{def2.4}.
\end{theorem}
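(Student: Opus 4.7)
The plan is to verify each of the three properties in Definition 2.4 in turn, leveraging Theorems 3.11 and 3.13 together with the assumed relaxation-oscillation behaviour of the driving subsystem $\Sigma_{xy}$ governed by \cref{eq3.2a,eq3.2b}.

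First, I would set up the behaviour of the driving signal $X$. Since $f,g$ are chosen so that $\Sigma_{xy}$ satisfies the hypotheses of Krupa--Szmolyan and possesses a unique unstable equilibrium with a relaxation-oscillation orbit $\Gamma_{\epsilon_1}$ lying in the $O(\epsilon_1)$-neighbourhood of the singular orbit $\Gamma_0$, any flow starting from a point in the first quadrant other than the unstable equilibrium converges to $\Gamma_{\epsilon_1}$. In particular, the hypothesis on $(x(0),y(0))$ guarantees that $x(t)$ eventually oscillates along $\Gamma_{\epsilon_1}$, exhibiting the abrupt transitions between a high amplitude branch and a low amplitude branch which are characteristic of S-shaped critical manifolds.

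Second, I would invoke Theorem \ref{thm3.11} to reduce the full $4$-dimensional dynamics to the slow manifold $\mathscr{C}_{\epsilon_2}$: starting from any initial point in the admissible region, the trajectory merges instantaneously (on the fast time scale $\tau=t/\epsilon_2$) into $\mathscr{C}_{\epsilon_2}$ and stays there. Then, applying Theorem \ref{thm3.13} with the standing assumption that $p$ sits strictly between the high and low amplitudes of $x$, the components $(u(t),v(t))$ are slaved to $x(t)$ with the explicit approximations \cref{eq3.8,eq3.9}, each up to an $O(\epsilon_1)$ error.

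Third, I would check the three conditions of Definition \ref{def2.4} one by one. For condition~2, when $x$ is at the high amplitude \cref{eq3.8} gives $v(x)\approx x-p>\delta>0$ and $u(x)\approx 0$, while at the low amplitude \cref{eq3.9} gives $u(x)\approx p-x>\delta>0$ and $v(x)\approx 0$; thus each of $U,V$ is strictly bounded below on its high phase and vanishes up to $O(\epsilon_1)$ on its low phase. Condition~3, complementarity, is immediate from the same two displays: the high phase of $U$ coincides with the low phase of $V$ and vice versa. For condition~1, synchronicity follows because on $\mathscr{C}_{\epsilon_2}$ the values $u,v$ are functions of $x$ (up to $O(\epsilon_2)$) and $x$ alone drives the transitions; the abrupt transitions of $U,V$ inherit those of $X$ because a fast-layer transition of $x$ across the knees of the S-shaped manifold is transmitted through \cref{eq3.8,eq3.9} in a time window of order $\epsilon_2$, which is negligible on the slow time scale of the oscillation period.

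The main delicate point I would spend care on is the last one: rigorously arguing that the abrupt transitions of $X$ produce abrupt transitions of $U$ and $V$ rather than a smoothed response. The content of this step is the separation of time scales $\epsilon_2\ll 1$ together with the attracting normally hyperbolic character of $\mathscr{C}_0$ established in Lemma \ref{le3.7}: during each jump of $x$ the pair $(u,v)$ is pulled back to $\mathscr{C}_{\epsilon_2}$ in a time of order $\epsilon_2$, so in the limit one sees an essentially instantaneous swap between the two branches \cref{eq3.8} and \cref{eq3.9}, which is exactly what Definition \ref{def2.4}(1) requires.
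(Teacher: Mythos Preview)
Your proposal is correct and follows essentially the same route as the paper: invoke \cref{thm3.11} to confine the trajectory to the slow manifold $\mathscr{C}_{\epsilon_2}$, then use \cref{thm3.13} to read off the approximate values of $u,v$ as functions of $x$, and finally verify the three items of \cref{def2.4} from \cref{eq3.8,eq3.9} together with the abrupt transitions inherited from the relaxation oscillation of $X$. If anything, your treatment of item~1 (the time-scale argument for why the jumps of $U,V$ are abrupt) is more explicit than the paper's, which simply notes that $U,V$ oscillate synchronously with $X$ on $\mathscr{C}_0$ and therefore inherit its abrupt transitions.
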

\begin{proof}
  From \Cref{thm3.11}, we know that for the system considered, any trajectory originating from the area $\left \{(x,y,u,v):x,y>0, u,v \geq 0\right \}$ will merge instantaneously into the $O(\epsilon_2)$-neighborhood of critical manifold $\mathscr{C}_{0}$. Further, based on the algebraic equation \cref{eq3.7} defining $\mathscr{C}_0$, concentration of $U$ and $V$ under the conditions will oscillate synchronously with the driving species $X$. Note that abrupt transitions exist between phases of $X$, so do $U$ and $V$. The first item of \cref{def2.4} is satisfied. \Cref{thm3.13} provides an approximate description of $\mathscr{C}_0$, we can utilize \cref{eq3.8,eq3.9} as estimation of concentration of $U$ and $V$ with error $O(\epsilon_{1}+\epsilon_{2})$. Then the second and third items of \cref{def2.4} correspond naturally to the conclusion of \cref{thm3.13}. Therefore, as long as we avoid the unstable equilibrium point as $(x(0),y(0))$, the relaxation oscillator $X$ will drive $U$ and $V$ to oscillate synchronously, acting as the desired symmetrical clock signals.
\end{proof}

At the end of this section, we provide an instance of system \cref{eq3.2} to examine the results of our clock signal design.
\begin{example}[standard chemical relaxation oscillator]\label{ex3.16}
We use the modified van der Pol model presented in \cref{ex2.6} as a specific $2$-dimensional relaxation oscillator to produce the driving signal $X$. Through replacing (\ref{eq3.2a}) and (\ref{eq3.2b}) by (\ref{eq:gao4}) in (\ref{eq3.2}), we obtain the $4$-dimensional chemical oscillator model to be
\begin{subequations}\label{eq3.15}
    \begin{align}
         \epsilon_{1} \frac{\mathrm{d} x}{\mathrm{d} t} &=\eta_{1}(-x^3+9x^2-24x+21-y)x \label{eq3.15a}\ ,\\
         \frac{\mathrm{d} y}{\mathrm{d} t} &=\eta_{1}(x-3)y \label{eq3.15b}\ , \\
         \epsilon_{1}\epsilon_{2}\frac{\mathrm{d} u}{\mathrm{d} t} &= \eta_{1}(\epsilon_{1}(p-u)-uv)\label{eq3.15c}\ , \\
    \epsilon_{1}\epsilon_{2}\frac{\mathrm{d} v}{\mathrm{d} t} &= \eta_{1}(\epsilon_{1}(x-v)-uv)\label{eq3.15d}\ .
    \end{align}
\end{subequations}
Notice that the modified van der Pol model adds $\left \{(x,y):x=0 \right \}$ into the critical manifold induced by the original van der Pol model (\ref{eq:gao2}), but this will not affect the evolution of trajectory from the latter to the former due to the selection of the initial point $(x(0),y(0)) \in \mathbb{R}^2_{>0}$. We name this $4$-dimensional model (\ref{eq3.15}) as the standard chemical relaxation oscillator in the context.
     
With $\epsilon_{1}=\epsilon_{2}=0.001, \eta_{1}=0.1, p=3$ and initial point $(x(0), y(0), u(0), v(0))=(5,5,0,0)$, we show the simulation result of the oscillating species $U$ and $V$ in \cref{fig4}. Obviously, they satisfy the requirements in \cref{def2.4}. 
\end{example}

\begin{figure}[htbp]
  \centering
  \includegraphics[width=1\linewidth,scale=1.00]{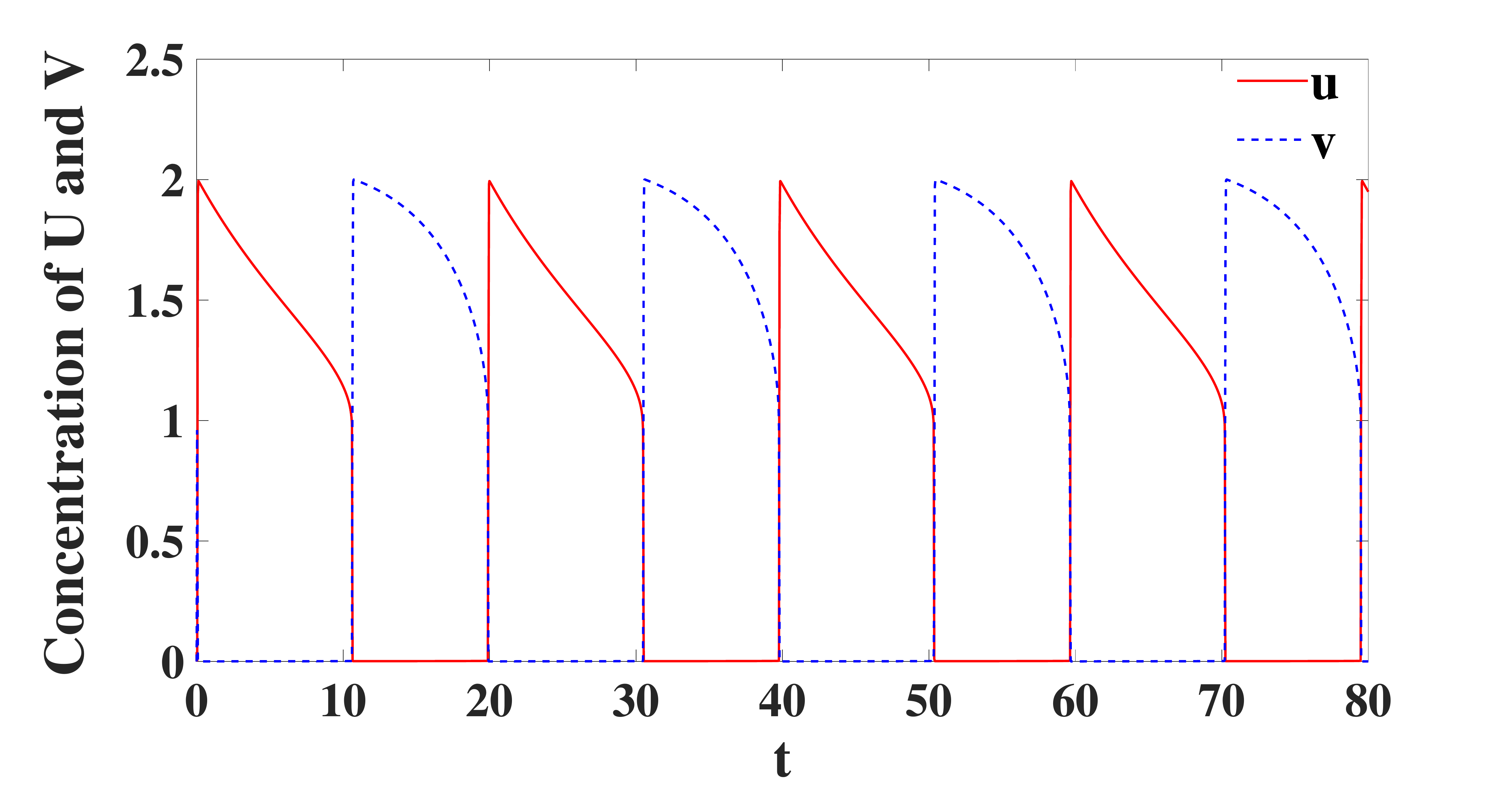}
  \caption{The symmetrical clock signals $U$ and $V$ suggested in \cref{ex3.16}.}
  \label{fig4}
\end{figure}

\section{Period and occurrence order control of symmetrical clock signals}\label{sec:fur}
In this section, based on the standard symmetrical clock signals, we discuss how to control the period and occurrence order of the oscillating species via adjusting oscillator parameters and initial values, respectively.  

\subsection{Oscillator parameters to control the period of symmetrical clock signals}
In the standard chemical relaxation oscillator of \cref{eq3.15}, species $X$ is the driving signal for symmetrical clock signals $U$ and $V$, so we are only concerned with the effect of parameters encountered in \cref{eq3.15a} and \cref{eq3.15b} on them. However, to ensure a S-shaped critical manifold constructed and the unique equilibrium assigned on the repelling part as \cref{fig2} shows, we neglect to discuss those parameters in \cref{eq3.15a}, but keep the current modified van der Pol model (a cubic function) completely. The only parameter in \cref{eq3.15b} that can determine the equilibrium is ``$3$" parameter, we thus redefine the subsystem $\Sigma_{xy}$ of  \cref{eq3.15a} and \cref{eq3.15b} to be 

     \begin{subequations}\label{eq4.1}
         \begin{align}
             \epsilon_{1} \frac{\mathrm{d} x}{\mathrm{d} t} &=\eta_{1}(-x^3+9x^2-24x+21-y)x \ ,\\
    \frac{\mathrm{d} y}{\mathrm{d} t} &=\eta_{1}(x-\ell)y \label{eq4.1b}\ , 
         \end{align}
     \end{subequations}
where $\ell\in \mathbb{R}$ replaces the original ``3", and induces an indefinite equilibrium $x^*=\ell$ for species $X$. Based on the work of Krupa and Szmolyan \cite{krupa2001relaxation}, it can be inferred that in the first quadrant (1) when $\ell<2$ or $\ell>4$, the unique equilibrium $(x^*,y^*)=(\ell, -\ell^3+9\ell^2-24\ell)$ is asymptotically stable, so trajectory will converge to it along the critical manifold and oscillation disappears; (2) when $\ell=2$ or $4$, the equilibrium $(x^*,y^*)=(2,1)$ or $(4,5)$ is actually the non-hyperbolic points on the critical manifold $\mathscr{C}_0$; (3) when $2<\ell<4$, there exists a relaxation oscillation that leads to the instability of the equilibrium. Therefore, the third case $2<\ell<4$ is the available interval, but still in need of avoiding being too close to $2$ or $4$ in practice. Note that in the first quadrant the change of $\ell$ just affects the position of equilibrium of system \cref{eq4.1} along its critical manifold but does not inflect the latter, and the related singular trajectory is exactly the same with $\Gamma_0$ given in \cref{Gamma_0}. 


\begin{lemma}\label{le4.1}
Given a $2$-dimensional relaxation oscillator in the form of \cref{eq4.1}, for its singular trajectory of the first quadrant, i.e., $\Gamma_0$ in \cref{Gamma_0} with $\varphi(x)=-x^3+9x^2-24x+21$, denote the left and right part of $\Gamma_0$ by $\Gamma_{l,0}=\left \{(x,\varphi(x)):1<x<2 \right \}$ and $\Gamma_{r,0}=\left \{(x,\varphi(x)):4<x<5 \right \}$ separately, and their corresponding  relaxation oscillation orbit parts by $\Gamma_{l,\epsilon_{1}}$ and $\Gamma_{r,\epsilon_{1}}$, which can be described as
\begin{subequations}
    \begin{align}
    \Gamma_{l, \epsilon_{1}}: y=& \chi_{l}(x,\epsilon_{1} ) \ , 1<x<2 \ , \\
    \Gamma_{r, \epsilon_{1}}: y=& \chi_{r}(x,\epsilon_{1} ) \ , 4<x<5 \ .
    \end{align}
\end{subequations}
Then for sufficiently small $\epsilon_{0}>0$, there exist differentiable mappings $\psi_{1}$ and $\psi_{2}$ defined separately on $(1,2) \times (0,\epsilon_{0})$ and $(4,5) \times (0,\epsilon_{0})$ as
\begin{subequations}
    \begin{align}
    \psi_{1}&: (x,\epsilon_{1}) \mapsto \varphi(x)-\chi_{l}(x,\epsilon_{1} )\ ,\\
    \psi_{2}&: (x,\epsilon_{1}) \mapsto \chi_{r}(x,\epsilon_{1} )-\varphi(x)\ ,
    \end{align}
\end{subequations}
and moreover, $\left | \psi _{1}(x,\epsilon_{1} ) \right |< O (\epsilon_{1} )\ ,
\left | \psi _{2}(x,\epsilon_{1} ) \right |< O (\epsilon_{1} )\ $.
\end{lemma}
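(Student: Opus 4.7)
The plan is to invoke Fenichel's slow manifold theorem on the two attracting branches of the critical manifold, then extract the $O(\epsilon_1)$ estimate from an asymptotic expansion of the invariance equation. First, I would verify that $\Gamma_{l,0}$ and $\Gamma_{r,0}$ are normally hyperbolic and attracting (in the sense of \cref{def3.6}). Writing $f(x,y)=\eta_1(-x^3+9x^2-24x+21-y)x$ and using that $y=\varphi(x)$ makes the bracket vanish, a direct calculation gives $\partial f/\partial x = -3\eta_1 x(x-2)(x-4)$ along the critical manifold, which is strictly negative on $(1,2)\cup(4,5)$. Thus both branches qualify for Fenichel's theorem, and the only degeneracies occur at the fold points $x\in\{1,2,4,5\}$.

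Second, I would apply Fenichel's theorem on any compact sub-interval of $(1,2)$ (and analogously of $(4,5)$) that is bounded away from the fold points. This yields a locally invariant slow manifold $O(\epsilon_1)$-close to the corresponding branch of $\mathscr{C}_0$. Since $\Gamma_{l,0}$ and $\Gamma_{r,0}$ are themselves graphs of $\varphi$, the implicit function theorem promotes the perturbed manifold to a graph $y=\chi_l(x,\epsilon_1)$ (resp.\ $y=\chi_r(x,\epsilon_1)$) for $\epsilon_1$ small enough. Fenichel's theorem also supplies $C^k$-smoothness jointly in $x$ and $\epsilon_1$, making $\psi_1,\psi_2$ well-defined differentiable functions on the claimed domains.

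Third, to pin down the $O(\epsilon_1)$ bound, I would substitute the graph $y=\chi(x,\epsilon_1)$ into the flow, so that invariance forces
$$\chi_x(x,\epsilon_1)\bigl(\varphi(x)-\chi(x,\epsilon_1)\bigr)x=\epsilon_1(x-\ell)\chi(x,\epsilon_1).$$
Seeking an expansion $\chi(x,\epsilon_1)=\varphi(x)+\epsilon_1\chi_1(x)+O(\epsilon_1^2)$ and matching coefficients at first order in $\epsilon_1$ yields
$$\chi_1(x)=-\frac{(x-\ell)\varphi(x)}{x\,\varphi'(x)},$$
which is well-defined and bounded on any compact sub-interval of $(1,2)$ or $(4,5)$, because the zeros of $\varphi'$ are exactly at $x=2$ and $x=4$ and $x>0$ throughout. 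Consequently $|\psi_1(x,\epsilon_1)|,\,|\psi_2(x,\epsilon_1)|=\epsilon_1|\chi_1(x)|+O(\epsilon_1^2)=O(\epsilon_1)$.

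The main obstacle is precisely the behaviour near the fold points: normal hyperbolicity fails at $x=2$ and $x=4$, and the formal correction $\chi_1(x)$ blows up there because $\varphi'(x)\to 0$. Hence the notation $|\psi_i|<O(\epsilon_1)$ must be read as \emph{locally uniform} in $x$, with the implicit constant depending on the distance from the fold points. This is the standard caveat in geometric singular perturbation theory, and a sharper uniform statement up to the folds would require the more delicate blow-up analysis of Krupa and Szmolyan; for the present purpose (estimating the period in the upcoming section) the locally uniform bound is enough, since the singular-perturbation correction on the slow branches is integrated against a smooth weight that stays away from the folds except on sets of measure $O(\epsilon_1^{2/3})$ contributed by the jumps.
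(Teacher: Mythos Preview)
Your approach is essentially the paper's: both recognize $\Gamma_{l,0}$ and $\Gamma_{r,0}$ as normally hyperbolic attracting branches, invoke Fenichel's slow manifold theorem to obtain smooth perturbed graphs $O(\epsilon_1)$-close to them, and read off the bound on $\psi_i$. The paper's proof is two sentences and stops there; you go further by (i) actually computing $\partial f/\partial x=-3\eta_1 x(x-2)(x-4)$ to verify attractivity, (ii) deriving the explicit first-order correction $\chi_1(x)=-(x-\ell)\varphi(x)/(x\varphi'(x))$ from the invariance equation, and (iii) flagging that the $O(\epsilon_1)$ constant degenerates near the folds. Item (ii) is genuinely useful downstream, since the practical period estimate \cref{eq4.8} tacitly assumes $\partial\psi_i/\partial x$ is negligible, which your formula makes explicit. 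One small slip: you list $x\in\{1,2,4,5\}$ as degeneracies, but only $x=2,4$ are fold points; $\varphi'(1)=\varphi'(5)=-9\neq 0$, so normal hyperbolicity holds at the landing points.
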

\begin{proof}
$\Gamma_{l,0}$ and $\Gamma_{r,0}$ are two normally hyperbolic parts of the critical manifold, and $\Gamma_{l,\epsilon_{1}}$ and $\Gamma_{r,\epsilon_{1}}$ actually describe the slow manifold corresponding to $\Gamma_{l,0}$ and $\Gamma_{r,0}$. According to Fenichel Slow Manifold Theorem, $\Gamma_{l,\epsilon_{1}}$ and $\Gamma_{r,\epsilon_{1}}$ are separately differential homeomorphic to $\Gamma_{l,0}$ and $\Gamma_{r,0}$, which confirms the existence of differentiable mappings $\psi_{1}$ and $\psi_{2}$. Moreover, $\psi_{1}$ refers to the distance between $\Gamma_{l,\epsilon_{1}}$ and $\Gamma_{l,0}$, which together with the fact that $\Gamma_{l,\epsilon_{1}}$ lies in the $O(\epsilon_1)$-neighborhood of $\Gamma_{l}$, suggests $\left | \psi _{1}(x,\epsilon_{1} ) \right |< O (\epsilon_{1} )$. Based on the same logic, we can get $\left | \psi _2(x,\epsilon_{1} ) \right |< O (\epsilon_{1} )$, too.
\end{proof}

The oscillating period of the driving signal $X$ in \cref{eq4.1} is the time cost to travel along the whole singular trajectory $\Gamma_0$, which can be approximately control by $\ell$ through the following theorem.

\begin{theorem}\label{thm4.2}
For the $2$-dimensional relaxation oscillator \cref{eq4.1}, the oscillating period of $x$ is approximated to be $T=T_{l}+T_{h}$, where $T_{l}$ and $T_{h}$ separately refers to the period of $x$ at low amplitude and high amplitude, given by
    \begin{subequations}\label{eq4.2}
        \begin{align}
            T_{l}&= \int_{1}^{2}\frac{(\varphi'(x)-\frac{\partial \psi _{1}}{\partial x}(x,\epsilon_{1} ))dx}{\eta_{1}(x-\ell)(\varphi(x)-\psi _{1}(x,\epsilon_{1} ))} \label{eq4.2a}\ , \\
            T_{h}&= \int_{5}^{4}\frac{(\varphi'(x)+\frac{\partial \psi _{2}}{\partial x}(x,\epsilon_{1} ))dx}{\eta_{1}(x-\ell)(f(x)+\psi _{2}(x,\epsilon_{1} ))} \label{eq4.2b}\  
        \end{align}
    \end{subequations}
with $\varphi(x)$, $\psi_1(x,\epsilon_1)$ and $\psi_2(x,\epsilon_1)$ to have the same meanings as those in \cref{le4.1}.
\end{theorem}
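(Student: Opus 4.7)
The plan is to decompose the full period along the limit cycle $\Gamma_{\epsilon_1}$ into the transit times of its four standard pieces, namely the two slow segments along the attracting branches $\Gamma_{l,\epsilon_1}$ and $\Gamma_{r,\epsilon_1}$ of the slow manifold, and the two fast horizontal jumps that connect them near the fold points $x=2$ and $x=4$. The claim will be that the two slow times are exactly the quadratures (4.2a) and (4.2b), while the fast-jump times vanish with $\epsilon_1$, so their sum gives the asserted leading-order period.

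First I would pin down the orientation of the slow flow on each branch, which fixes the integration limits. Throughout, $\varphi'(x) = -3(x-2)(x-4) < 0$ on both $(1,2)$ and $(4,5)$, and $\ell \in (2,4)$ by assumption. Differentiating the slow-manifold parametrization $y = \varphi(x) - \psi_1(x,\epsilon_1)$ provided by \cref{le4.1} along $\Gamma_{l,\epsilon_1}$ and using \cref{eq4.1b} gives the one-dimensional reduction
\begin{equation*}
\left(\varphi'(x) - \tfrac{\partial \psi_1}{\partial x}(x,\epsilon_1)\right)\dot x \;=\; \eta_1(x-\ell)\bigl(\varphi(x) - \psi_1(x,\epsilon_1)\bigr).
\end{equation*}
For $\epsilon_1$ small enough, the left factor is negative and the right-hand side is negative on $(1,2)$, so $\dot x > 0$ and $x$ sweeps from $1$ up to the left fold. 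The analogous derivation on $\Gamma_{r,\epsilon_1}$ with $y = \varphi(x) + \psi_2(x,\epsilon_1)$ yields $\dot x < 0$, sending $x$ from $5$ down to the right fold. Solving each identity for $dt = dx/\dot x$ and integrating over the corresponding oriented interval delivers (4.2a) and (4.2b) verbatim; the integrands are continuous on the closed intervals because the denominators are bounded away from zero once $\ell$ stays away from $2$ and $4$.

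Finally I would estimate the time spent on the two fast connections. Away from the folds, the layer system $\epsilon_1\dot x = \eta_1 x(\varphi(x)-y)$ moves $x$ with speed of order $\epsilon_1^{-1}$, so each horizontal jump is traversed in time $O(\epsilon_1)$. The delicate region is a neighborhood of each fold, where $\varphi'$ vanishes and the simple reduction $dt = dx/\dot x$ breaks down. The main obstacle is bounding the residence time there; I would not reprove this, but instead invoke the Krupa--Szmolyan fold-passage analysis in \cite{krupa2001relaxation} (already used to guarantee the existence of $\Gamma_{\epsilon_1}$), which shows that the transit time past each nondegenerate fold is $O(\epsilon_1^{2/3})$. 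Adding the four contributions gives a period of $T_l + T_h + O(\epsilon_1^{2/3})$, which establishes the stated approximation.
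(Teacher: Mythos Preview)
Your argument is correct and follows essentially the same route as the paper: parametrize each slow branch by $x$ via \cref{le4.1}, apply the chain rule with \cref{eq4.1b} to obtain $dt=(dy/dx)/(dy/dt)\,dx$, integrate over $[1,2]$ and $[5,4]$, and then discard the fast-jump times. Your version is somewhat more careful than the paper's, which simply calls the horizontal transit times ``extremely short'' without the $O(\epsilon_1^{2/3})$ fold-passage bound or the orientation check you supply.
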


\begin{proof}
We first confirm the formula for $T_{l}$. According to \cref{le4.1}, we can use $y=\varphi(x)-\psi_{1}(x,\epsilon_{1} )$ to express the orbit when $x$ oscillates at low amplitude, i.e., $\Gamma_{l,\epsilon_{1}}$. Then we get
\begin{equation}\label{eq4.6a}
         \frac{\mathrm{d} y}{\mathrm{d} x}= \varphi'(x)-\frac{\partial \psi _{1}(x,\epsilon_{1} )}{\partial x}\ .
\end{equation}
Furthermore substituting $y=\varphi(x)-\psi_{1}(x,\epsilon_{1} )$ into the right hand of \cref{eq4.1b} yield
\begin{equation}\label{eq4.6b}
\frac{\mathrm{d} y}{\mathrm{d} t}=\eta_{1}(x-\ell)(\varphi(x)-\psi _{1}(x,\epsilon_{1} ))\ .
\end{equation}
Hence the time it takes to travel along $\Gamma_{l,\epsilon_{1}}$ is given by
\begin{equation}\label{eq4.7}
    T_{l}=\int _{\Gamma_{l,\epsilon_{1}}}dt=\int_{1}^{2}\frac{1}{\frac{\mathrm{d} x}{\mathrm{d} t}}dx=\int_{1}^{2}\frac{\frac{\mathrm{d} y}{\mathrm{d} x}}{\frac{\mathrm{d} y}{\mathrm{d} t}}dx 
    =\int_{1}^{2}\frac{(\varphi'(x)-\frac{\partial \psi _{1}}{\partial x}(x,\epsilon_{1} ))dx}{\eta_{1}(x-\ell)(\varphi(x)-\psi _{1}(x,\epsilon_{1} ))}\ .
\end{equation}
In the same way, we can prove $T_{h}$ of \cref{eq4.2b}. Except for $T_l$ and $T_h$, the whole period of $x$ includes the time to travel along the horizontal trajectory parts in the form of fast flow. We thus ignore these extremely short time costs and take $T=T_{l}+T_{h}$ as an approximation of the whole period of $x$.         
\end{proof}

Note that \cref{thm4.2} provides an approximation to the oscillating period of the driving signal $X$, but it is not a practical one. The main reason is the lack of explicit expressions of $\psi _{i}(x,\epsilon_{1}), i=1,2$ in \cref{eq4.2a,eq4.2b}. A more practical estimation is needed. Clearly, in those two formulas $\left | \psi _{i}(x,\epsilon_{1} ) \right | < O(\epsilon_{1}), i=1,2$ are very small for sufficiently small $\epsilon_{1}$, and can thus be considered as negligible; also, $\frac{\partial \psi _{1}(x,\epsilon_{1} )}{\partial x}$ (or $\frac{\partial \psi _{2}(x,\epsilon_{1} )}{\partial x}$) can be thought to be very small to be abandoned since the relaxation oscillation orbit $\Gamma_{l,\epsilon_1}$ (or $\Gamma_{r,\epsilon_1}$) is nearly ``parallel" to $\Gamma_{l,0}$ (or $\Gamma_{r,0}$). We thus give a more simplified but more practical estimation to $T_{l}$ and $T_{h}$ as
\begin{subequations}\label{eq4.8}
    \begin{align}
       T_{l} &\approx  \int_{1}^{2}\frac{\varphi'(x)dx}{\eta_{1}(x-\ell)\varphi(x)} \label{eq4.8a}\ ,  \\
        T_{h} &\approx  \int_{5}^{4}\frac{\varphi'(x)dx}{\eta_{1}(x-\ell)\varphi(x)} \label{eq4.8b}\ .    \end{align}
\end{subequations}

An immediate application of these estimates is to $\Sigma_{xy}$ of the standard chemical relaxation oscillator of \cref{ex3.16}, and by setting $\ell=3$ and $\eta_1=0.1$, we obtain
\begin{subequations}\label{periodEs}
    \begin{align}
         T_{l} &\approx  \int_{1}^{2}\frac{10(-3x^2+18x-24)dx}{(x-3)(-x^3+9x^2-24x+21)}= 10.470\ , \\T_{h} &\approx  \int_{5}^{4}\frac{10(-3x^2+18x-24)dx}{(x-3)(-x^3+9x^2-24x+21)}= 9.193\ .
    \end{align}
\end{subequations}
In the whole standard chemical relaxation oscillator, the oscillation of the driving signal $X$ will stimulate the symmetrical clock signals $U$ and $V$ to oscillate synchronously. Under the parameters of \cref{ex3.16}, when $X$ travels along $\Gamma_{l,\epsilon_1}$, $U$ will travel at high amplitude (Let $T_1$ represent the consumed time); when $X$ travels along $\Gamma_{r,\epsilon_1}$, $V$ will travel at high amplitude ($T_2$ represents the consumed time). Therefore, we have $T_1\approx T_l$ and $T_2\approx T_h$, which are basically consistent with \cref{fig4}. This indicates that the estimations by \cref{eq4.8} are reliable.

\begin{remark}
The estimation formulas \cref{eq4.8} also reveal that, compared to $\ell$, the parameter $\eta_{1}$ can control $T_l$ and $T_h$ more directly. As they actually correspond to the maximum time used for the controlled reaction modular, such as $\tilde{\mathcal{M}}_1$ an $\tilde{\mathcal{M}}_2$ in \cref{ex2.3}, to perform computation, their adjustment can help prolong or shorten the execution time of reaction modular quantitatively. 
\end{remark}

\subsection{Oscillator initial values to control the occurrence of symmetrical clock signals}
Like discussing oscillator parameters, here we only consider the effect of the initial values of species appearing in subsystem $\Sigma_{xy}$ of the standard chemical relaxation oscillator \cref{eq3.15}, i.e., the effect of $(x(0),y(0))$ on oscillating behaviors of $U$ and $V$. As one might know, for subsystem $\Sigma_{xy}$ governed by \cref{eq3.15a,eq3.15b}, the $\omega-limit$ set\footnote{The $\omega-limit$ set refers to the invariant closed set to which the trajectory converges as time $t$ approaches positive infinity. The $\omega-limit$ sets of plane vector fields are usually divided into two categories: closed orbit and equilibrium point.} in the first quadrant only consists of the relaxation oscillation orbit $\Gamma_{\epsilon_{1}}$, i.e., the $O(\epsilon_1)$-neighborhood of $\Gamma_0$ defined in \cref{Gamma_0}, and a unstable equilibrium $(x^*,y^*)=(3,3)$. However, different initial values of $(x(0),y(0))$ will cause the trajectory of $\Sigma_{xy}$ to merge into different positions of $\Gamma_{\epsilon_{1}}$, which finally affects the behavior of $U$ and $V$. 

To depict this effect, we look at the repelling part of its critical manifold in the first quadrant, which takes $\left \{(x,\varphi(x)):~ 2<x<4 \right \}$ with $\varphi(x)=-x^3+9x^2-24x+21$, and is identified by $\varphi_{re}(x)$ for distinguishing from $\varphi(x)$. Through rewriting it as $\left \{(\varphi_{re}^{-1}(y),y):~ 1<y<5 \right \}$, where $\varphi_{re}^{-1}(y)$ means the inverse function of $\varphi(x)$, we can divide the first quadrant of the phase plane of $\Sigma_{xy}$ into two areas 
    \begin{align*}
        \mathcal{A}_1:\left \{(x,y):y\geq5\right \} \cup \left \{(x,y):x<\varphi^{-1}_{re}(y),1<y<5\right \}
        \cup \left \{(\varphi^{-1}_{re}(y),y):3<y<5\right \}\ ,\\
         \mathcal{A}_2:\left \{(x,y):y\leq1\right \} \cup \left \{(x,y):x>\varphi^{-1}_{re}(y),1<y<5\right \}  \cup \left \{(\varphi^{-1}_{re}(y),y):1<y<3\right \}\ ,     
    \end{align*}
where only the unstable equilibrium $(x^*,y^*)=(3,3)$ is excluded. 

\begin{proposition}\label{pro4.4}
Given the subsystem $\Sigma_{xy}$ governed by \cref{eq3.15a,eq3.15b}, any of its trajectories starting from an initial point $(x(0),y(0))\in \mathcal{A}_1$ merges into the left part of relaxation oscillation orbit $\Gamma_{\epsilon_{1}}$, the $O(\epsilon_1)$-neighborhood of $\Gamma_0$ given in \cref{Gamma_0}. The situation changes to the right part of $\Gamma_{\epsilon_{1}}$ if $(x(0),y(0))\in \mathcal{A}_2$.
\end{proposition}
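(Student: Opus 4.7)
The plan is to exploit the slow-fast structure of $\Sigma_{xy}$: in the singular limit $\epsilon_1 \to 0$, the fast flow (with $y$ held constant) pushes $x$ horizontally toward the attracting portion of the critical manifold $y = \varphi(x)$, which consists of the left branch $L = \{(x, \varphi(x)): 0 < x < 2\}$ and the right branch $R = \{(x, \varphi(x)): x > 4\}$, separated by the repelling middle branch $M$. My aim is to show that $\mathcal{A}_1$ lies in the basin of $L$ under the fast flow and $\mathcal{A}_2$ in the basin of $R$; once a trajectory enters the $O(\epsilon_1)$-neighborhood of an attracting branch, the Fenichel Slow Manifold Theorem (already invoked in \cref{slow manifold theorem}) guarantees that it joins $\Gamma_{\epsilon_1}$ and remains nearby thereafter.

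For $(x(0), y(0)) \in \mathcal{A}_1$ not on $M$, the argument reduces to inspecting the sign of $\varphi(x) - y(0)$ on the horizontal ray $y = y(0)$, $x > 0$. When $y(0) \geq 5$, above the upper fold, the cubic $\varphi(x) = y(0)$ admits a unique positive root, and it lies in $(0, 1] \subset L$; when $1 < y(0) < 5$ with $x(0) < \varphi^{-1}_{re}(y(0))$, the three positive roots $x_l(y(0)) < \varphi^{-1}_{re}(y(0)) < x_r(y(0))$ of $\varphi(x) = y(0)$ satisfy $x_l(y(0)) \in L$, and a sign-of-$\dot{x}$ check in each of the subintervals $(0, x_l(y(0)))$ and $(x_l(y(0)), \varphi^{-1}_{re}(y(0)))$ shows that the horizontal orbit converges to $x_l(y(0))$. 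Either way the trajectory is swept into the $O(\epsilon_1)$-neighborhood of $L$, hence onto $\Gamma_{l, \epsilon_1}$. The case $(x(0), y(0)) \in \mathcal{A}_2$ not on $M$ is handled by the symmetric argument and lands the orbit on $\Gamma_{r, \epsilon_1}$.

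The delicate case is an initial point on $M$ itself, which is included in $\mathcal{A}_1$ exactly when $y(0) > 3$. Here $\dot{x} = 0$ but $\dot{y} = \eta_1 y(0)(x(0) - 3) > 0$, so the orbit instantly enters the region $\{y > \varphi(x)\}$ just above $M$; in that region $\dot{x} < 0$, and the horizontal fast flow then sweeps $x$ leftward past $M$ onto $L$. A symmetric argument covers the lower half of $M \subset \mathcal{A}_2$, where $\dot{y} < 0$ drives the orbit below $M$ and then rightward onto $R$; the equilibrium $(3,3)$ is excluded by the standing hypothesis of \cref{thm:3.15}. The main technical hurdle is precisely this on-$M$ case, since $M$ is repelling and the singular-limit picture need not commute with $\epsilon_1 > 0$ at points exactly on $M$; I would resolve it by observing that the first-order sign of $\dot{y}$ at the initial point already determines which fold the orbit will approach, and this sign is robust under the $O(\epsilon_1)$ perturbation of the critical manifold supplied by Fenichel theory.
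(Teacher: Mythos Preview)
Your argument is essentially the same as the paper's: both decompose $\mathcal{A}_1$ into the region above the orbit, the region strictly to the left of the repelling branch $M$, and the upper half of $M$ itself, handling the first two via the horizontal fast flow toward the left attracting branch and the third by observing that $\dot y>0$ immediately pushes the orbit off $M$ into the second region. The paper's proof is terser and does not spell out the root structure of $\varphi(x)=y(0)$ or invoke Fenichel explicitly, but the underlying logic and case split are identical.
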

\begin{proof}
    We focus on providing a statement about $\mathcal{A}_1$. The first part of $\mathcal{A}_1$ corresponds to area above $\Gamma_{\epsilon_{1}}$, originating from which the trajectory can only converge horizontally towards the neighborhood of left part of the critical manifold and then merge into the left part of $\Gamma_{\epsilon_{1}}$, i.e., $\Gamma_{l,\epsilon_1}$, along the slow manifold. The trajectory originating from the second part will merge instantaneously into $\Gamma_{l,\epsilon_1}$ because of the effect of the repelling manifold $\varphi_{re}(x)$. The third part describes the upper segment of $\varphi_{re}(x)$, from which the trajectory will immediately enter the second part of $\mathcal{A}_1$ for that $\frac{\mathrm{d} y}{\mathrm{d} t}>0$. The situation for $(x(0),y(0))\in \mathcal{A}_2$ is just inverse. 
\end{proof}

\begin{remark}
\cref{pro4.4} suggests that as long as the initial point of $\Sigma_{xy}$ governed by \cref{eq3.15a,eq3.15b} gets around the unique unstable equilibrium $(3,3)$, the oscillation takes place. Furthermore, to change its position in $\mathcal{A}_1$ or $\mathcal{A}_2$ may lead to the initial oscillation of $x$ at high amplitude or at low amplitude. Frankly, this is not a strict request on the initial point, which is quite different from the one proposed by Arredondo and Lakin \cite{arredondo2022supervised}. They required initial concentration of some species to differ by an order of $10^{6}$ from others. This is not easy to do in real chemical reaction realization. Therefore the current oscillator model is quite encouraging and competitive.
\end{remark}

\begin{figure}[htbp]
  \centering
\includegraphics[width=1\linewidth,scale=1.00]{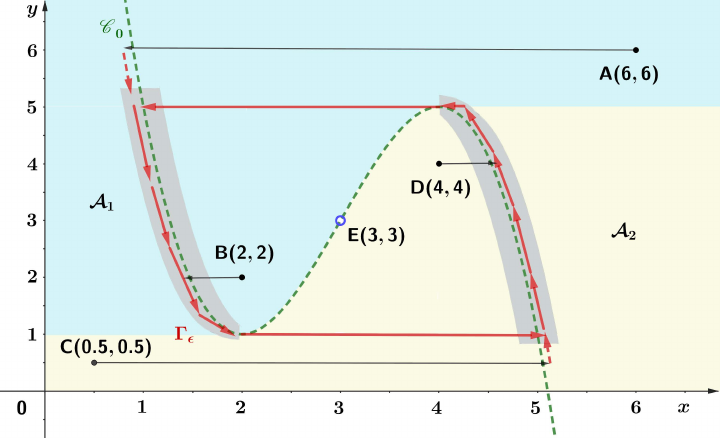}
  \caption{Trajectory evolution diagram of subsystem $\Sigma_{xy}$ of \cref{eq3.15} starting from four different initial points $A(6,6),~B(2,2)\in\mathcal{A}_1$ and $C(0.5,0.5,~D(4,4)\in\mathcal{A}_2$, where the green, red and black lines express the same information as given in \cref{fig2}.}
  \label{fig5}
\end{figure}

\begin{figure}[tbhp]
\centering
\subfloat[$x(0)=y(0)=6$.]{\label{fig6a}\includegraphics[width=0.5\linewidth]{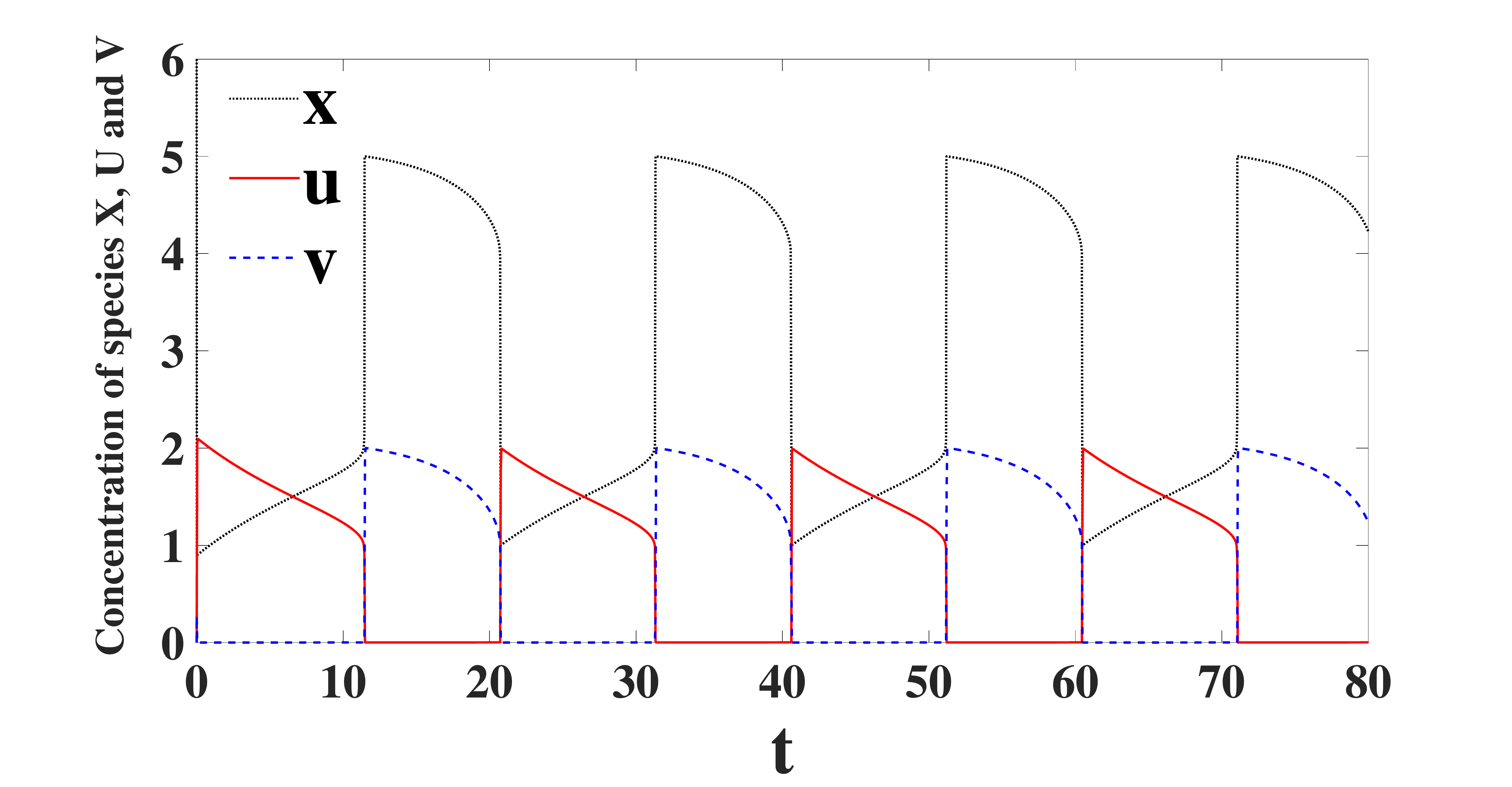}}
\subfloat[$x(0)=y(0)=2$.]{\label{fig6b}\includegraphics[width=0.5\linewidth]{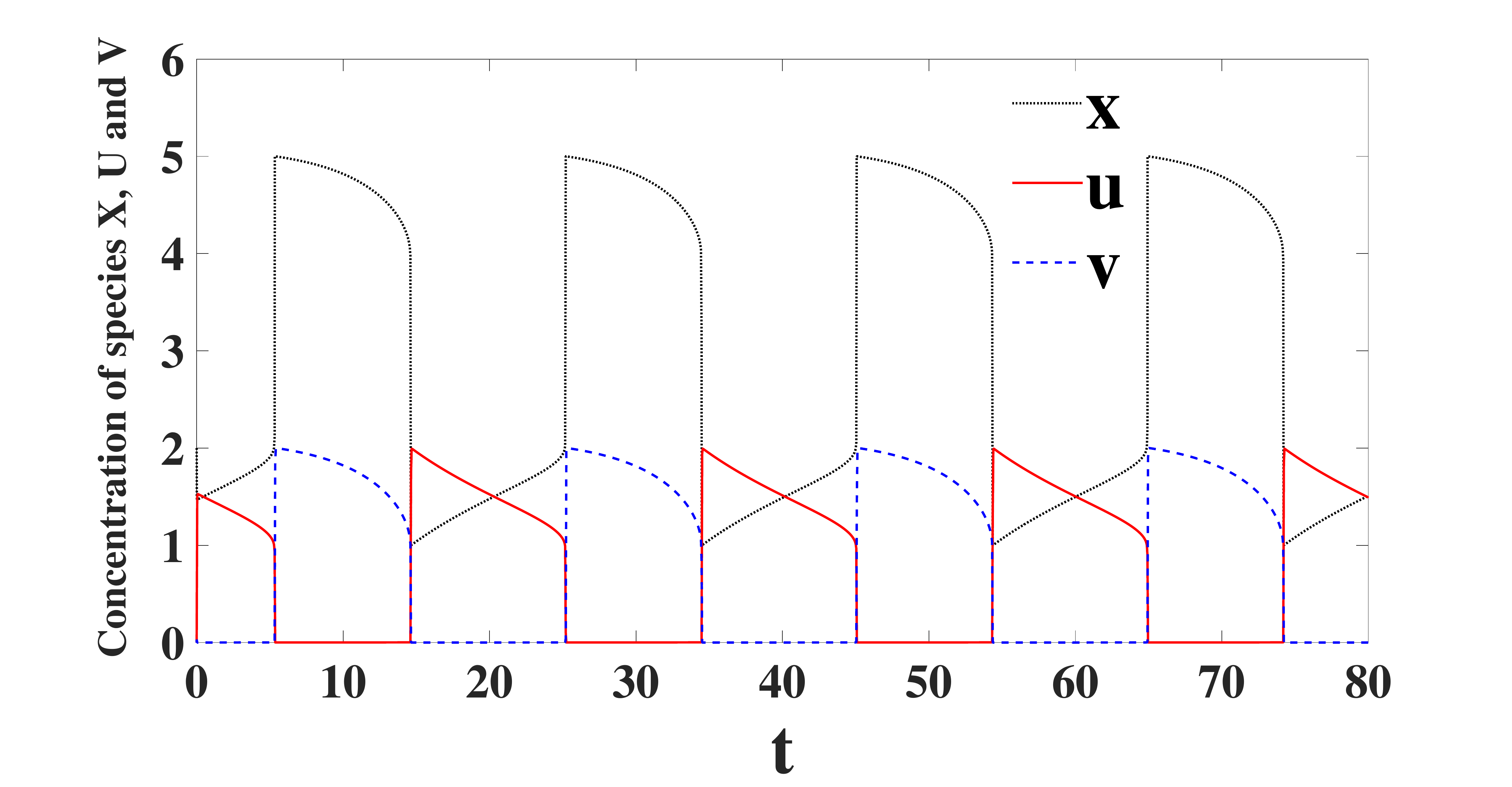}}
\qquad
\subfloat[$x(0)=y(0)=0.5$.]{\label{fig6c}\includegraphics[width=0.5\linewidth]{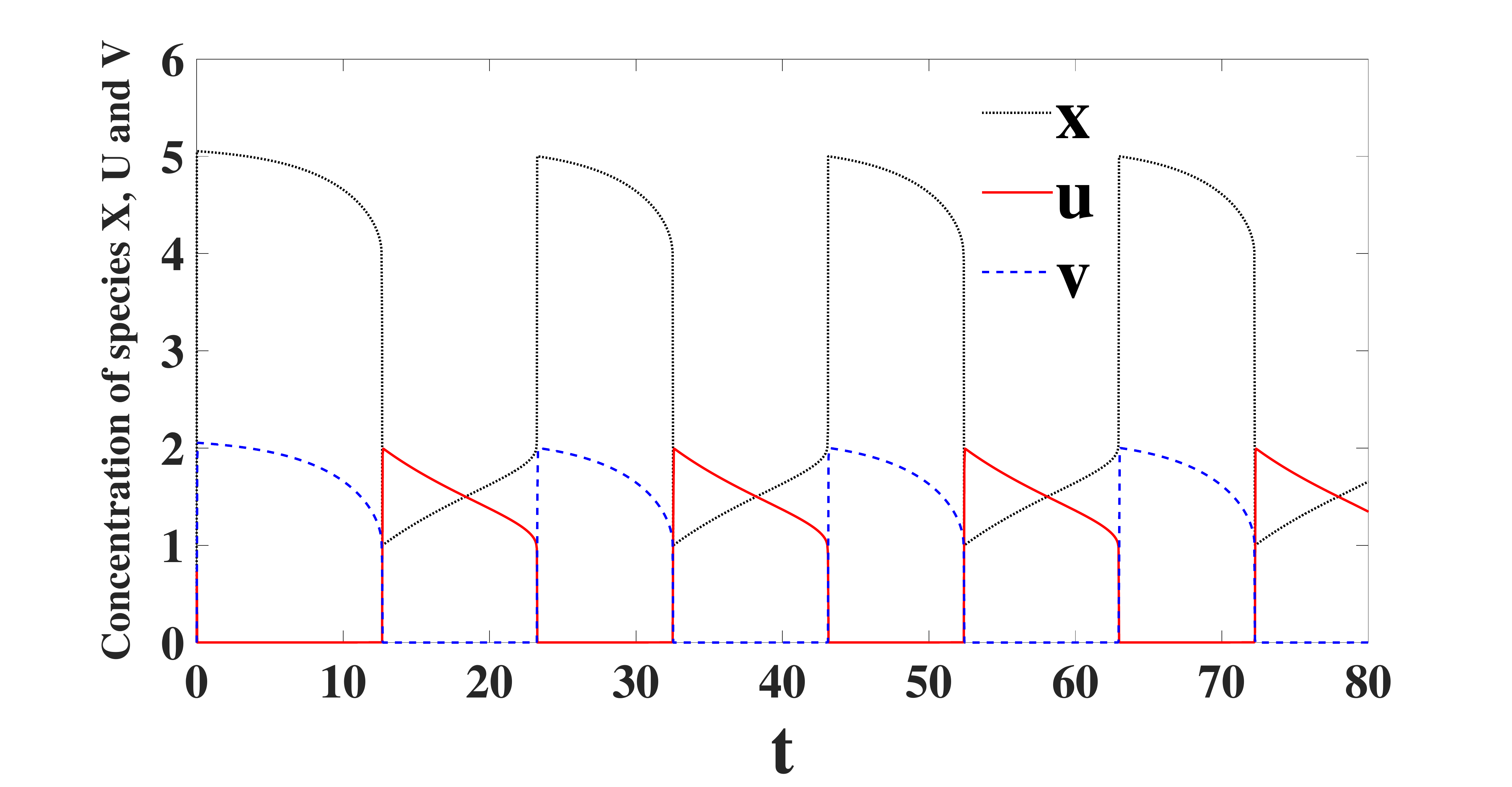}}
\subfloat[$x(0)=y(0)=4$.]{\label{fig6d}\includegraphics[width=0.5\linewidth]{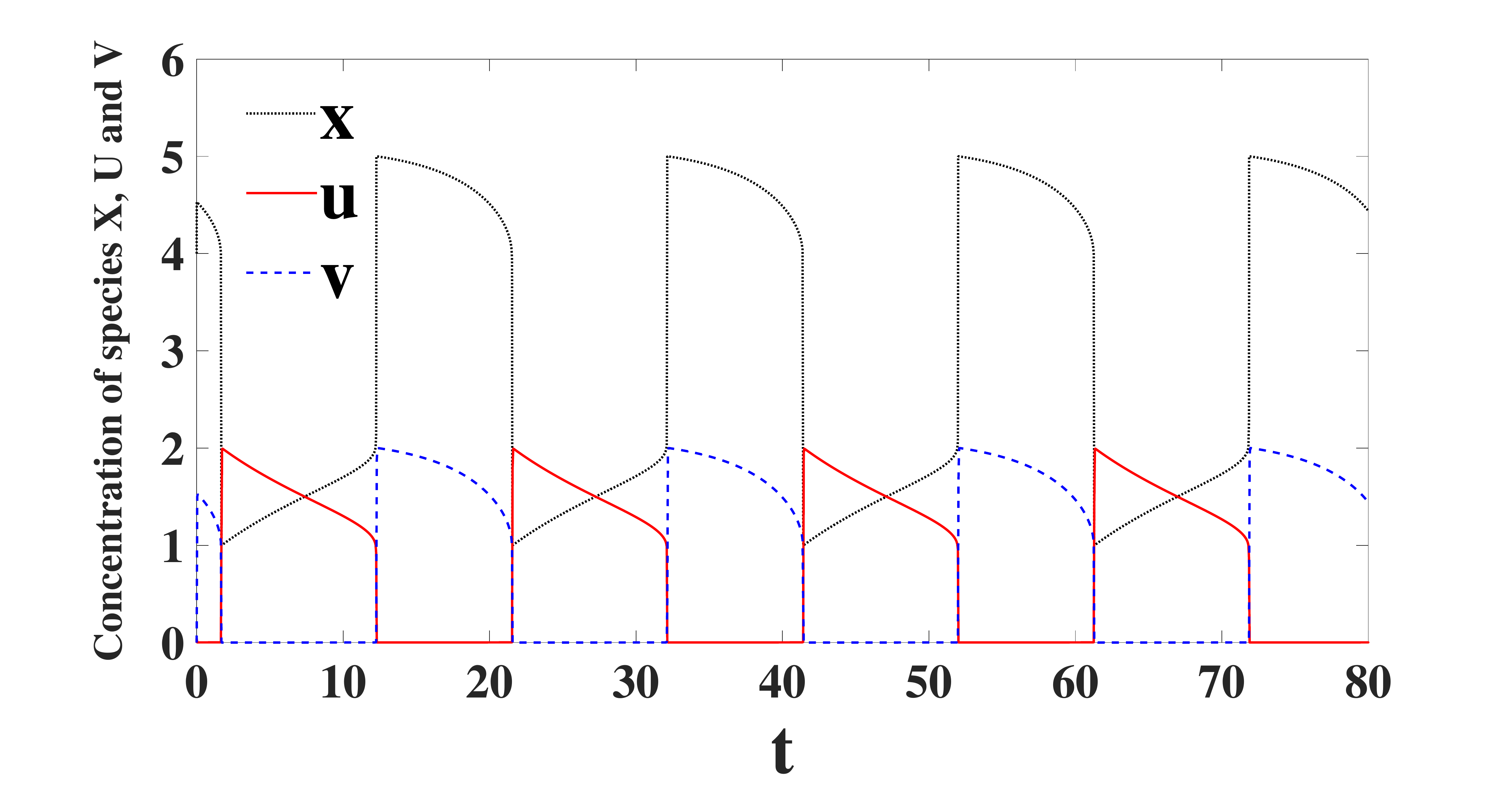}}
\caption{The oscillating behaviors of the standard chemical relaxation oscillator at a fixed initial value of $(u(0),v(0))=(0,0)$ but different initial values of $(x(0),y(0))$.}
\label{fig6}
\end{figure}

\Cref{fig5} displays the trajectory evolution starting from $4$ different initial points $A(6,6)\in\mathcal{A}_1,~ B(2,2)\in\mathcal{A}_1,~ C(0.5,0.5)\in\mathcal{A}_2$ and $D(4,4)\in\mathcal{A}_2$. They go towards the corresponding part of $\Gamma_{\epsilon_{1}}$ as \cref{pro4.4} expects. We also present the time evolution of the driving signal $X$ from these $4$ initial points in \cref{fig6}, where the time evolution of the $X$-driven symmetrical clock signals $U$ and $V$ are synchronously exhibited at a fixed initial point $(u(0),v(0))=(0,0)$. In these two figures, the other parameters $\epsilon_1,~\eta_1$ and $p$ of the standard chemical relaxation oscillator are taken the same values with those in \cref{ex3.16}. Combing \cref{fig5} and \cref{fig6} might suggest the following information.

(i) When starting from $A(6,6)$ or $B(2,2)$, the trajectory of $\Sigma_{xy}$ merges into the left part of relaxation oscillation orbit, i.e., $\Gamma_{l,\epsilon_1}$, and $x$ oscillates at the low amplitude first, leading to positive concentration of $U$ to appear, i.e., $u$ oscillating at high amplitude.  Speaking more specifically, the trajectory originating from $A(6,6)$ (may extend to the whole area $\left \{(x,y):y>5\right \}$) needs to first follow the slow manifold to reach $\Gamma_{l,\epsilon_1}$, which makes the first period of $U$ larger than the subsequent ones as \cref{fig6a} shows. However, when initial point is $B(2,2)$ (may extend to $\left \{(x,y):x<\varphi^{-1}_m(y),1<y<5\right \}$), the trajectory follows the fast manifold to reach $\Gamma_{l,\epsilon_1}$, which leads to the first period of $u$ smaller than the subsequent ones, as can be seen in \cref{fig6b}.  

(ii) \Cref{fig6c,fig6d} exhibit the corresponding cases where the trajectory merges into $\Gamma_{r,\epsilon_1}$ i.e. the right part of the relaxation oscillation orbit with positive concentration of $V$ in the first period. Furthermore, the first period of $v$ also appears larger or smaller than the subsequent ones when initial point is $C(0.5,0.5)$ or $D(4,4)$. 

\begin{remark}
For the standard chemical relaxation oscillator of \cref{eq3.15}, the position of initial point $(x(0),y(0))$ determines the oscillating position of driving signal $X$, either at high amplitude or at low amplitude, in the first period. This further drives the oscillating positions of symmetrical clock signals $U$ and $V$ in the first period, so it can control their occurrence order. Noticeably, whether $U$ (respectively, $V$) is existing or not will ``turn on" or ``turn off" the modular $\tilde{\mathcal{M}}_1$ (respectively, $\tilde{\mathcal{M}}_2$), as said in \cref{ex2.3}. Thus, the position of $(x(0),y(0))$ can finally control the computation order of $\tilde{\mathcal{M}}_1$ and $\tilde{\mathcal{M}}_2$. As an example of letting $\tilde{\mathcal{M}}_1$ execute first, the selection of initial value $(x(0),y(0))$ should ensure that the trajectory merges to $\Gamma_{l,\epsilon_1}$.  
\end{remark}


\section{Loop control and termination of molecular computations}\label{sec:ter}
In this section we apply the standard chemical relaxation oscillator of \cref{eq3.15}
 to control molecular computations periodically, and further present a termination strategy for them.

\subsection{Loop control of molecular computations}
We go back to the motivating example in \cref{sec2.3} that two reaction modules $\mathcal{M}_1$ and $\mathcal{M}_2$ are designed towards the target of making the loop iteration calculation $s_1=s_1+1$. Here, their calculations control is just taken as an application case for the standard chemical relaxation oscillator of \cref{eq3.15}. As stated in \cref{sec2.3}, these two modules need to be modified as $\tilde{\mathcal{M}}_1$ and $\tilde{\mathcal{M}}_2$, given in \cref{ex2.3}, to avoid coupling between $s_1$ and $s_2$. Then we can apply the standard chemical relaxation oscillator of \cref{eq3.15} to control their calculations through combining all related dynamical equations, i.e., combining \cref{eq3.15} with \cref{eq2.4}, which gives      
\begin{equation}\label{eq5.1}
    \begin{aligned}
         \epsilon_{1} \frac{\mathrm{d} x}{\mathrm{d} t} &=\eta_{1}(-x^3+9x^2-24x+21-y)x \ ,   &  \frac{\mathrm{d} s_{1}}{\mathrm{d} t} &= (s_{2} - s_{1})v\ , \\
        \frac{\mathrm{d} y}{\mathrm{d} t} &=\eta_{1}(x-3)y \ , &  \frac{\mathrm{d} s_{2}}{\mathrm{d} t} &= (s_{1} + s_{3} -s_{2})u\ ,  \\
   \epsilon_{1}\epsilon_{2}\frac{\mathrm{d} u}{\mathrm{d} t} &= \eta_{1}(\epsilon_{1}(p-u)-uv)\ , & \frac{\mathrm{d} s_{3}}{\mathrm{d} t} &= 0\ .  \\
\epsilon_{1}\epsilon_{2}\frac{\mathrm{d} v}{\mathrm{d} t} &= \eta_{1}(\epsilon_{1}(x-v)-uv)\ ,
    \end{aligned}
\end{equation}
The control flow goes like this: (1) the left-upper two equations produce the periodical signal $x$; (2) $x$ drives the left-lower two equations to generate symmetrical clock signals $u$ and $v$; (3) $u$ and $v$ control the iteration calculation $s_1^*=s_1^*+s_3^*$ through the right three equations according to disappearance of either $u$ or $v$.   

As an illustration, \cref{fig7a} presents the time evolution of $s_1$ and $s_2$ of the system \cref{eq5.1} starting from $(x(0),y(0),u(0),v(0),s_1(0),s_2(0),s_3(0))=  (5,5,0,0,0,0,1)$ with model parameters $\epsilon_{1}=\epsilon_{2}=0.001, \eta_{1}=0.1, p=3$, which are exactly the same with those in \cref{ex3.16}. Clearly, the curve of $s_1$ is a staircase-like function, and $s_1$ will add $1$ periodically as time goes on. This indicates that the standard chemical relaxation oscillator of \cref{eq3.15} is quite valid to periodically control the execution of computation modules $\tilde{\mathcal{M}}_1$ and $\tilde{\mathcal{M}}_2$, and finally reaches the target of performing the frequently-used loop iteration calculation $s_1=s_1+1$ encountered in many machine learning algorithms. A further look at \cref{fig7b} (actually an overlay of \cref{fig7a} and \cref{fig4}) reveals that the calculation time for these two modules are very short, i.e., time used for the curve of $s_1$ or $s_2$ to climb stair, compared to the sum of the oscillating periods of $U$ and $V$ estimated by \cref{periodEs}. The main reason is that both $\tilde{\mathcal{M}}_1$ and $\tilde{\mathcal{M}}_2$ complete calculation instruction at exponential speed, whose time consumption is far lower than $T_1+T_2\approx T_l+T_h\approx 19.6$s. There is still a large room to reduce the oscillating period of the driving signal $X$ in the standard chemical relaxation oscillator of \cref{eq3.15} for the current purpose.  

\begin{figure}[tbhp]
\centering
\subfloat[]{\label{fig7a}\includegraphics[width=0.5\linewidth]{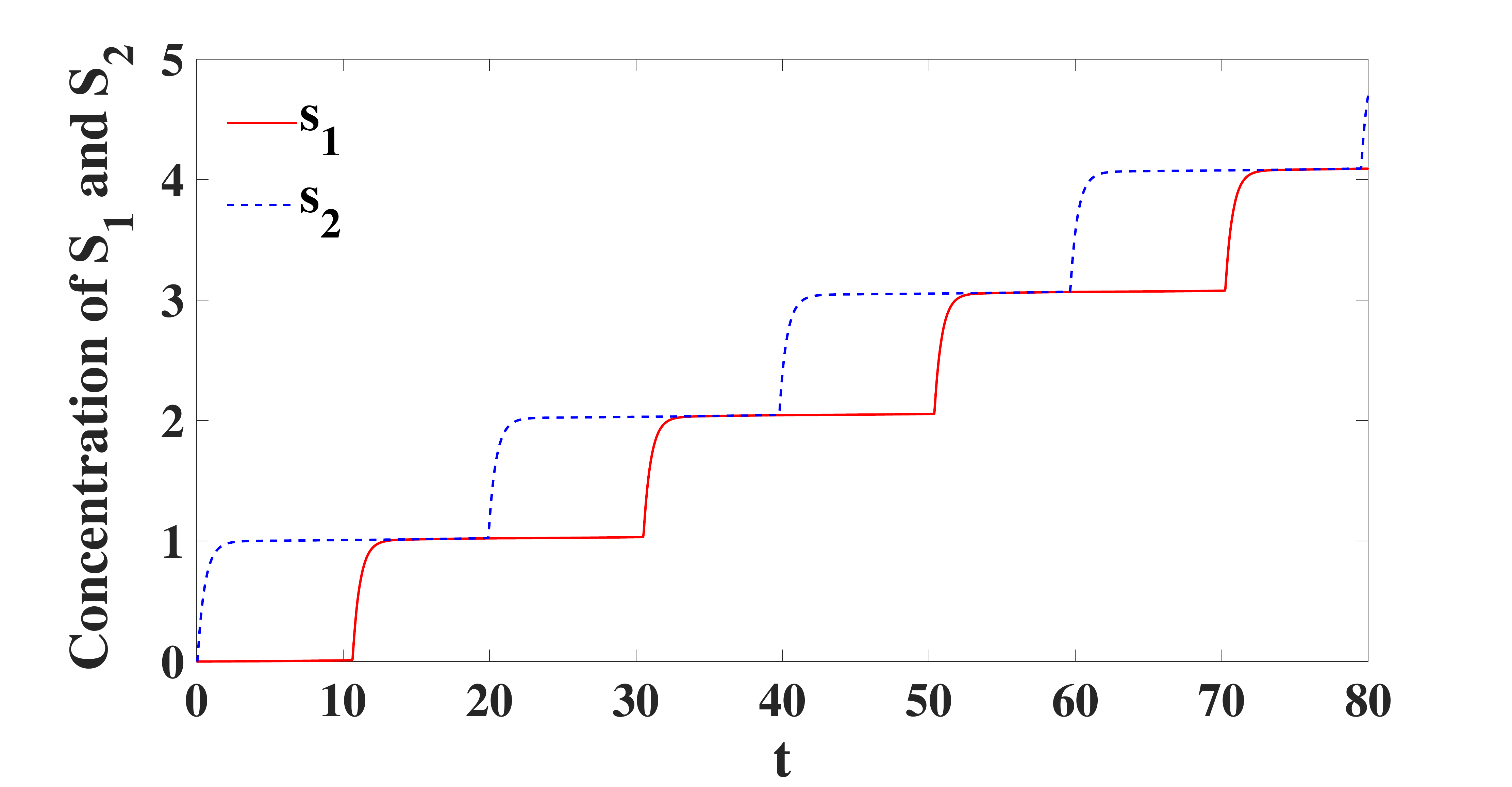}}
\subfloat[]{\label{fig7b}\includegraphics[width=0.5\linewidth]{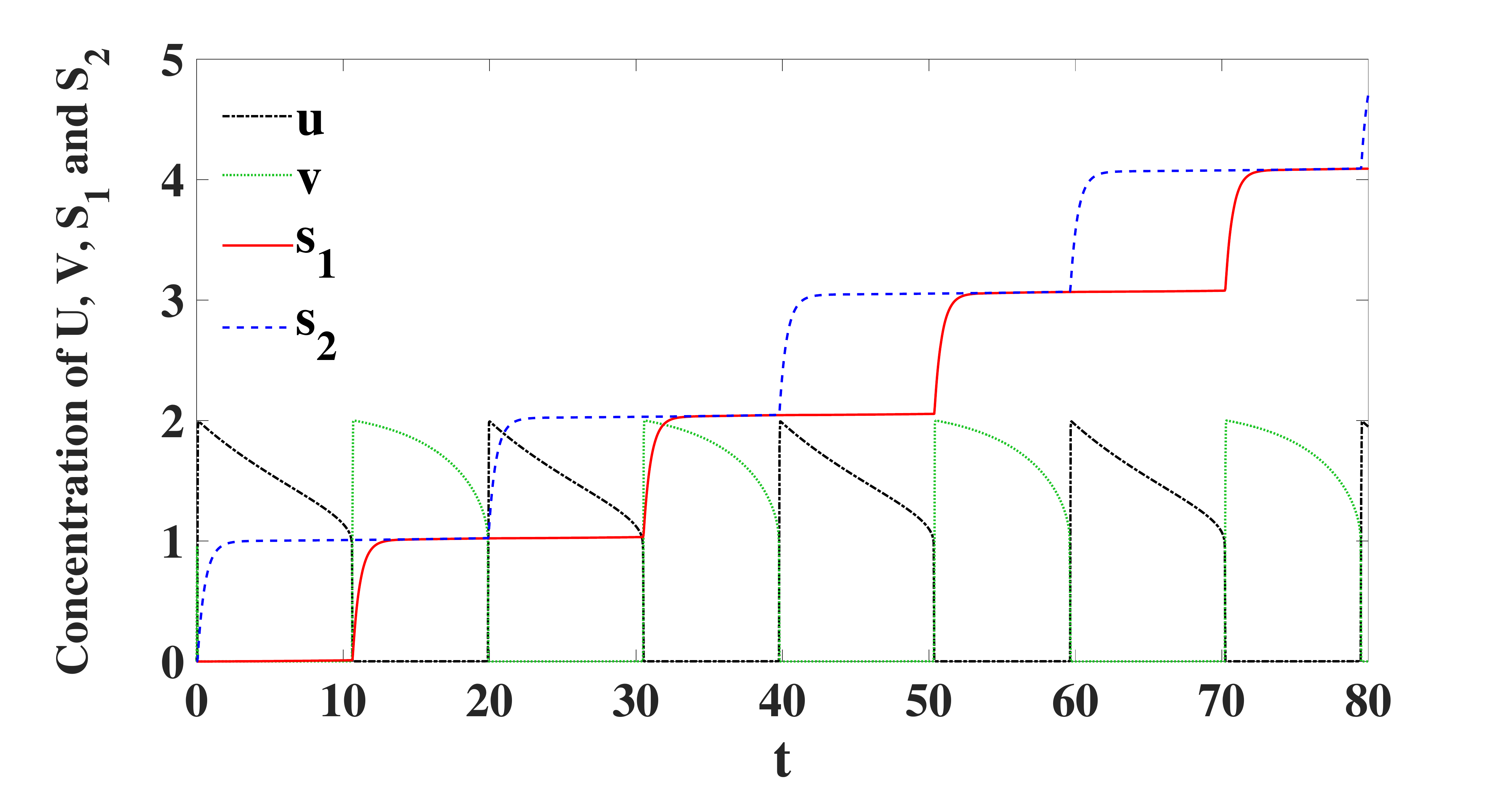}}
\caption{Time evolution of species $S_{1}$ and $S_{2}$ of the system \cref{eq5.1} in response to symmetrical clock signals $U$ and $V$: (a) without and (b) with $U$ and $V$ exhibited.}
\label{fig7}
\end{figure}


Although there are reaction networks corresponding to the dynamical equations \cref{eq3.15c}, \cref{eq3.15d} and \cref{eq2.4}, i.e., network \cref{msubtraction} with $\kappa=\eta_1/\epsilon_2$ and $\tilde{\mathcal{M}}_1$ plus $\tilde{\mathcal{M}}_2$ given in \cref{ex2.3}, respectively, the ODEs of \cref{eq5.1} are not enough to be representative of calculation made by chemical molecular since there is no reaction network corresponding to the equations of \cref{eq3.15a,eq3.15b}. It needs to find a mass-action CRN system that has the dynamics of \cref{eq3.15a,eq3.15b}, which is called CRN realization from the kinetic equations. Note that there should be many CRN realizations for the same kinetic equations, and a great deal of algorithms \cite{szederkenyi2011finding} have been developed to attain this purpose. Since how to realize a CRN from a group of kinetic equations is not the main topic of this work, we only provide a naive realization by directly transforming each monomial in \cref{eq3.15a,eq3.15b} into an abstract reaction where the species coefficients on the left and right sides are completely determined by the order of the monomial, and the parameters $\eta_1, \epsilon_1$ and $\epsilon_2$ all appear as the rate constants. The result is as follows
\begin{equation}\label{crn1}
    \begin{aligned}
4X&\xrightarrow{\eta_1/\epsilon_1}3X\ ,  & 3X&\xrightarrow{9\eta_{1}/\epsilon_{1} }4X\ , &
2X&\xrightarrow{24\eta_{1}/\epsilon_{1} }X\ , \\
X&\xrightarrow{21\eta_{1}/\epsilon_{1} }2X\ , &
X+Y&\xrightarrow{\eta_{1}/\epsilon_{1} }Y\ ,  &
X+Y&\xrightarrow{\eta_{1}}X+2Y\ , ~~~ ~~Y\xrightarrow{3\eta_{1}}\varnothing\ .
    \end{aligned}
\end{equation}

In the above reaction network, there include some very tiny parameters, such as $\epsilon_1$ and $\epsilon_2$, whose values are difficult to be evaluated precisely. As a result, it seems impossible to control the reaction rate constants as exact as assigned. Actually, these parameters, also including $\eta_1$ and even the coefficients in the S-shaped function, are not necessary to fit perfectly the assigned values. Their real values only ensure the system to generate oscillation and have a unique equilibrium which lies on the repelling part of the critical manifold and stays away from the two non-hyperbolic points.


\subsection{Loop termination of molecular computations}
We have finished the task of controlling iteration computation $s_1=s_1+1$ through the standard chemical relaxation oscillator of \cref{eq3.15}. As the driving signal $X$'s oscillation goes on, the equilibrium concentration of species $S_1$ will add $1$ periodically. This phenomenon will never stop even if there is usually a restriction of upper bound on $s_1^*$, e.g., when it is used to model the ``iteration times" in machine learning algorithm, an instruction of iteration termination should be necessary. For this reason, we introduce a new species $W$, called counter species, which on the one hand, uses its concentration $w$ to measure the difference between a given loop times $l$, an integer representing the concentration of termination species $L$, and the concentration of $S_1$; and on the other hand, can control the occurrence or termination of computation modules. 

Towards the above first purpose, we construct the following specific truncated subtraction module to generate the species $W$
\begin{align}\label{eq5.2}
  L+W &\overset{\eta_{3} }{\rightarrow} L+2W\ , &
   S_{1}+W &\overset{\eta_{3} }{\rightarrow} S_{1}\ , &  2W &\overset{\eta_{3} }{\rightarrow} W\ ,
\end{align}
whose ODEs are expressed as
\begin{subequations}\label{eq5.3}
    \begin{align}
         \frac{\mathrm{d} w}{\mathrm{d} t}&=\eta_3(l-s_{1}-w)w\ , \\
    \frac{\mathrm{d} s_{1}}{\mathrm{d} t}&=\frac{\mathrm{d} l}{\mathrm{d} t}=0\ .
    \end{align}
\end{subequations}

\begin{remark}\label{rek5.1}
The analytical solution of the ODEs \cref{eq5.3} is 
\begin{equation}
              w= \frac{(l-s_{1})w(0)}{w(0)+(l-s_{1}-w(0))e^{-\eta_3(l-s_{1})t}}\ 
\end{equation}
with $l \neq s_1$. In the case of $l> s_{1}$ and $w(0)>0$, $w$ converges exponentially to $l-s_{1}$; in the case of $l= s_{1}$, $w$ degenerates to the linear form of 
\begin{equation}\label{eq:w}
    w=\frac{w(0)}{1+\eta_3w(0)t}\ ;
\end{equation}
and in the case of $l< s_{1}$, the equilibrium is $0$. The reaction rate constant $\eta_3$ plays the role of regulating the convergence speed of $w$.   
\end{remark}

\cref{rek5.1} suggests that species $W$ will vanish as reactions \cref{eq5.2} proceed. Therefore this species can behave as a catalyst to serve for the previous second purpose, i.e., controlling the occurrence or termination of computation modules. We add it as a catalyst to each reaction of the computation modules $\tilde{\mathcal{M}}_1$ and $\tilde{\mathcal{M}}_2$, which yields two new computation modules $\hat{\mathcal{M}}_{1}$ and $\hat{\mathcal{M}}_{2}$ as
 \begin{align*}
    \hat{\mathcal{M}}_{1}:
        S_{1} + U +W &\to S_{1} + S_{2} + U +W\ , & \hat{\mathcal{M}}_{2}: S_{2} + V +W &\to S_{1} + S_{2} + V +W \ ,\\
        S_{3} + U + W&\to S_{3} + S_{2} + U + W\ , & S_{1} + V + W&\to V + W\ .\\
        S_{2} + U + W&\to U + W\ ; 
    \end{align*}
At this time the dynamics changes to be
\begin{equation}\label{eq5.6}
\begin{aligned}
   \frac{\mathrm{d} s_{1}}{\mathrm{d} t} = (s_{2} - s_{1})vw\ , 
    ~~~~\frac{\mathrm{d} s_{2}}{\mathrm{d} t} = (s_{1} + s_{3} -s_{2})uw\ ,~~~~ \frac{\mathrm{d} s_{3}}{\mathrm{d} t} = 0\ .
    \end{aligned}
\end{equation}
It is obvious that species $W$ has the same power as species $U$ and $V$, only determining the occurrence or termination of reactions, but not changing the positive equilibrium of system \cref{eq5.6}.  

 
We couple all ODEs of \cref{eq3.15,eq5.2,eq5.6}, and get the whole ODEs to be 
\begin{equation}\label{eq5.5}
\begin{aligned}
          \epsilon_{1} \frac{\mathrm{d} x}{\mathrm{d} t} &=\eta_{1}(-x^3+9x^2-24x+21-y)x \ , \ \ \ &  \frac{\mathrm{d} s_{1}}{\mathrm{d} t} &= (s_{2} - s_{1})vw\ ,  \\
         \frac{\mathrm{d} y}{\mathrm{d} t} &=\eta_{1}(x-3)y \ , 
  &  \frac{\mathrm{d} s_{2}}{\mathrm{d} t} &= (s_{1} + s_{3} -s_{2})uw\ , \\
         \epsilon_{1}\epsilon_{2}\frac{\mathrm{d} u}{\mathrm{d} t} &= \eta_{1}(\epsilon_{1}(p-u)-uv)\ ,  &   \frac{\mathrm{d} w}{\mathrm{d} t}&=\eta_{3}(l-s_{1}-w)w\ ,\\
    \epsilon_{1}\epsilon_{2}\frac{\mathrm{d} v}{\mathrm{d} t} &= \eta_{1}(\epsilon_{1}(x-v)-uv)\ , &  \frac{\mathrm{d} s_{3}}{\mathrm{d} t}& = \frac{\mathrm{d} l}{\mathrm{d} t} = 0\ ,  \\
    \end{aligned}
\end{equation}
which has the function of automatically performing loop iteration calculation and timely terminating it when calculation times is beyond a desired point. \cref{fig8} shows the loop termination results by species $W$, where \cref{fig8a} (respectively, \cref{fig8b}) simulates the case of $\eta_{3}=1$ (respectively, $\eta_3=50$), $w(0)=l(0)=4$, and other parameters and initial value information taken the same as given for \cref{fig7}. When $\eta_{3}=1$, neither $s_1$ nor $s_2$ is stable at $4$, but beyond a little. This means there is a lag to terminate $\hat{\mathcal{M}}_{1}$ and $\hat{\mathcal{M}}_{2}$. The reason is that when $s_1$ increases to be close to $l$, $w$ converges towards $0$ at a nearly linear speed as \cref{eq:w} shows, resulting in $w$ to shut down the two modules lagging behind expectation. However, when a larger $\eta_3=50$ is selected to accelerate the convergence of $w$, shown in \cref{fig7b}, the lag phenomenon is weakened and $s_1$ is stable at $4$. In practice a relatively large $\eta_3$ is thus desired to be selected. 

\begin{figure}[tbhp]
\centering
\subfloat[]{\label{fig8a}\includegraphics[width=0.5\linewidth]{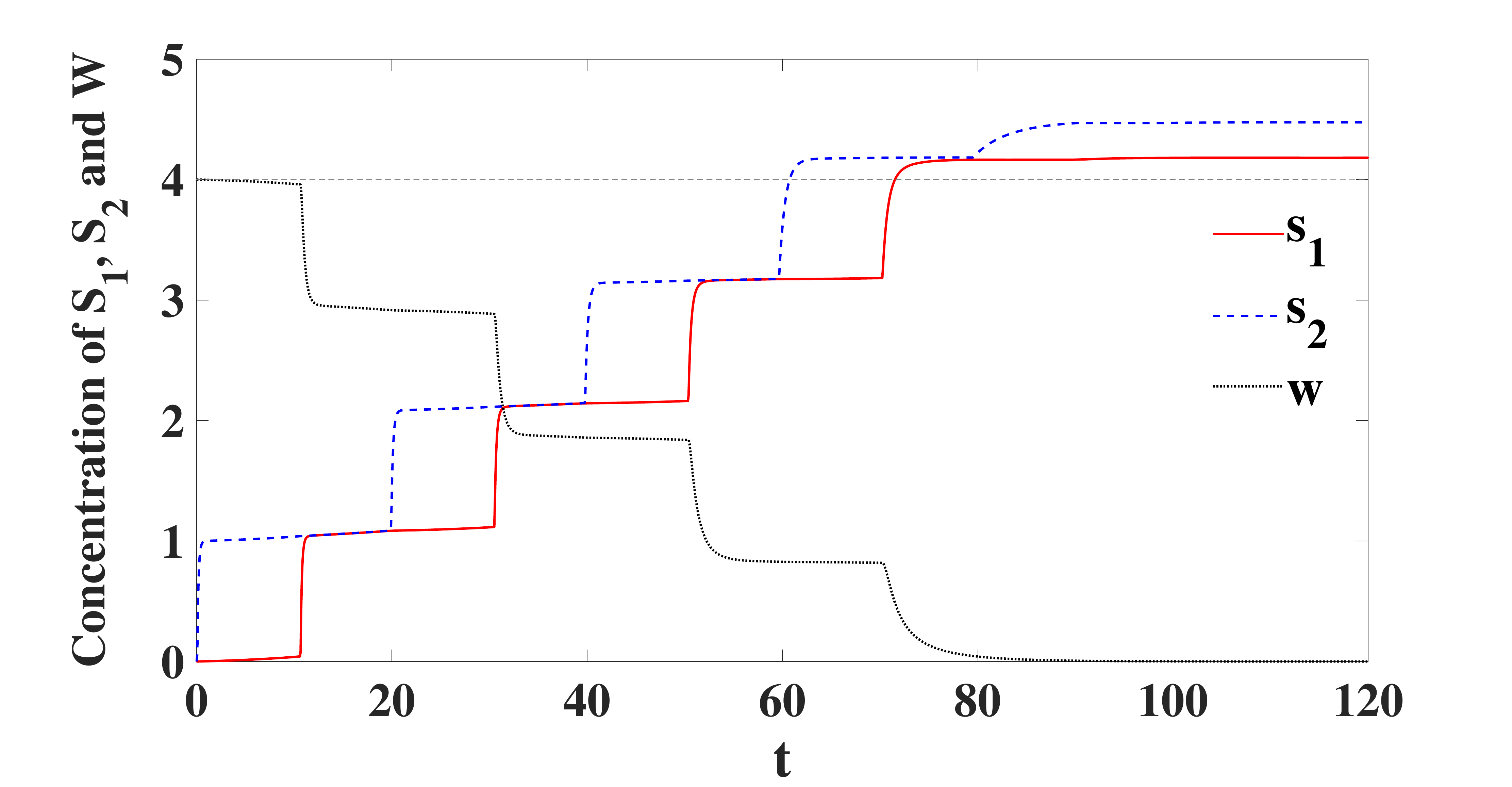}}
\subfloat[]{\label{fig8b}\includegraphics[width=0.5\linewidth]{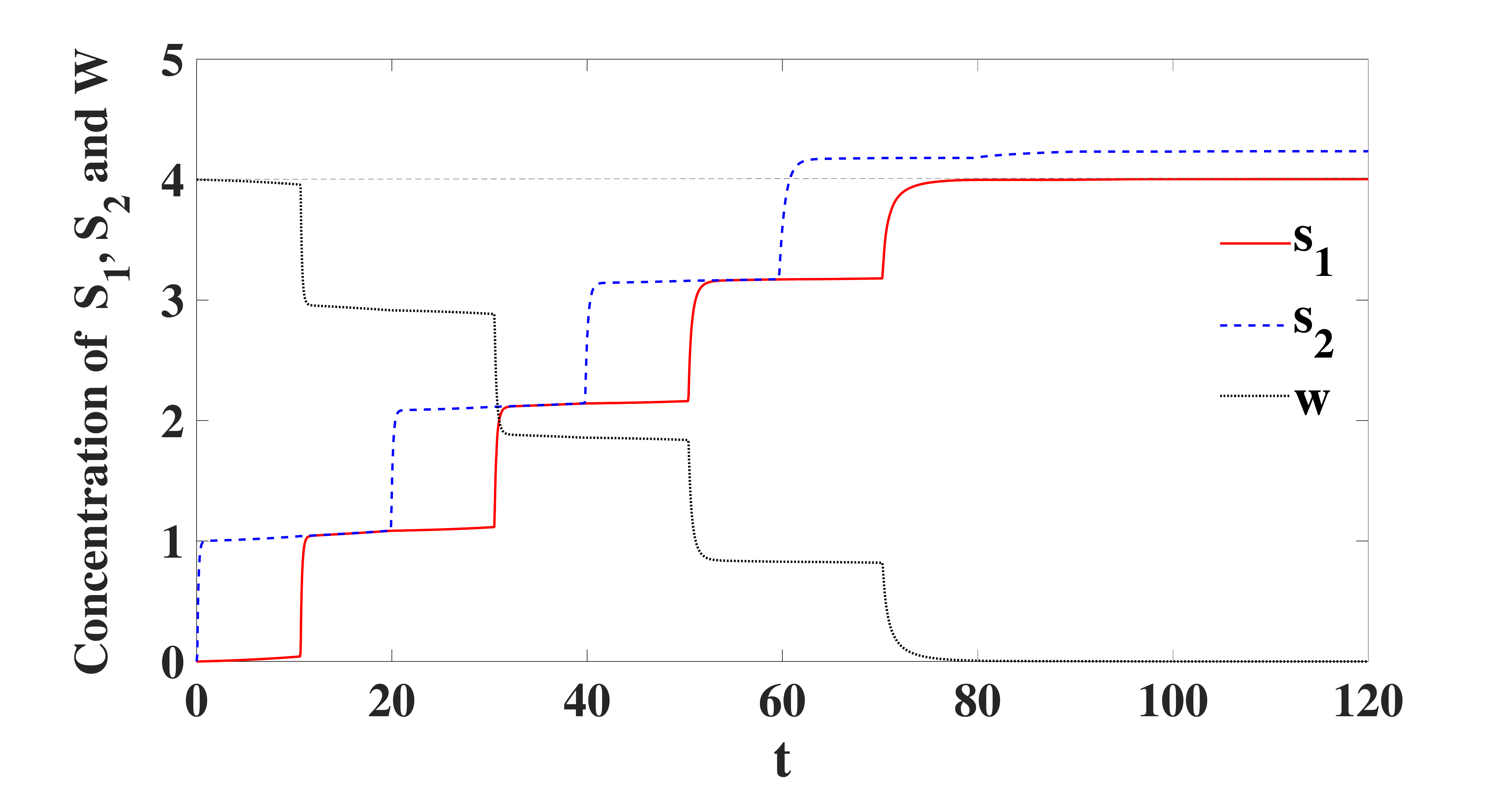}}
\caption{Loop termination of iteration computation $s_1=s_1+1$ by counter species $W$ in system \cref{eq5.5} with (a) $\eta_3=1$ and (b) $\eta_3=50$.}
\label{fig8}
\end{figure}

\begin{remark}
Our chemical relaxation oscillator design and loop termination strategy achieve the iteration computation $s_1=s_1+1$ well through chemical molecular reactions in the form of \cref{crn1} plus \cref{eq5.2} plus $\hat{\mathcal{M}}_1$ and $\hat{\mathcal{M}}_2$. This is rather a basic operation in many machine learning algorithms, and the whole modular can work as a counter CRN to control the alternation calculations of any two target modules, labeled by $\mathcal{TM}_1$ and $\mathcal{TM}_2$. As long as the symmetrical clock signals $U$ and $V$ generated by the standard chemical relaxation oscillator are fed into both $\mathcal{M}_1$, $\mathcal{M}_2$ and $\mathcal{TM}_1$, $\mathcal{TM}_2$ in parallel as catalysts, and the counter species $W$ is used to monitor the alternation times of their computations, the target calculation may be implemented. We exhibit the corresponding flowchart in \cref{fig9}. This application is quite promising, and may push the development of achieving all kinds of molecular computations.

\end{remark}




\begin{figure}[htbp]
  \centering
  \includegraphics[width=1\linewidth,scale=1.00]{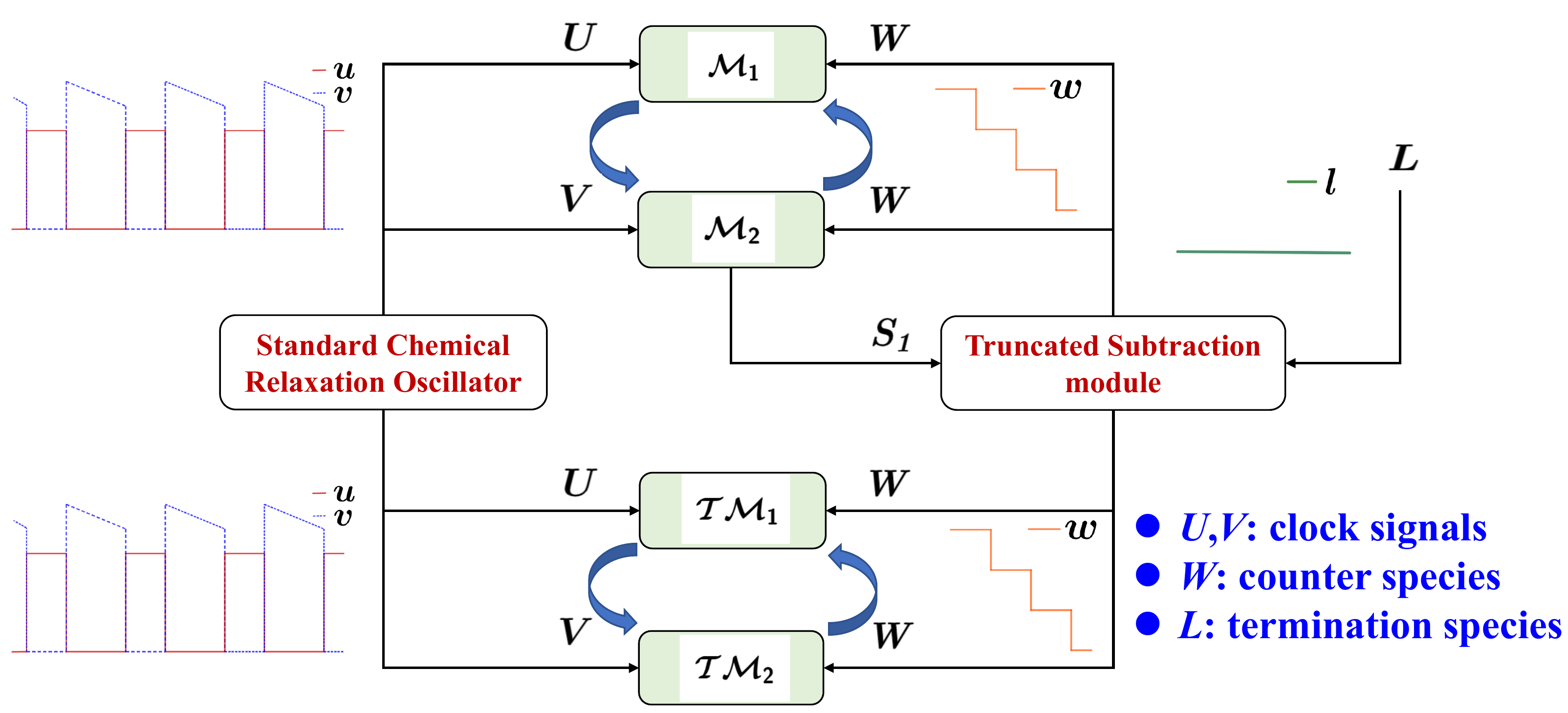}
  \caption{A schematic diagram of applying chemical oscillation-based iteration computation modules $\mathcal{M}_1$ plus $\mathcal{M}_2$ to control the alternative calculation and termination of two target modules $\mathcal{TM}_1$ and $\mathcal{TM}_2$.}
  \label{fig9}
\end{figure}

\section{Conclusions}
\label{sec:conclusions}
In this paper we develop a systematic approach to realize synchronous sequential computation with abstract chemical reactions. Our ultimate goal is to execute complex calculations in biochemical environments, and after setting the initial values of species and reaction rates, the biochemical system could run automatically to complete the target calculation task. 
For this, we design a $4$-dimensional oscillator model to generate a pair of symmetric clock signals $U$ and $V$ whose concentrations change periodically. Different from the chemical oscillators used in previous work, we construct the $4$-dimensional oscillator model based on the architecture of $2$-dimensional relaxation oscillation. We strictly analyze the dynamical properties of the oscillator model and discuss the conditions of parameters and initial values to control the period and occurrence order of $U$ and $V$. The strength of our model lies in a broad selection of initial values and a clear, easy-to-implement parameter choice.
 We demonstrate the process of module regulation under the example of $\mathcal{M}_1$ and $\mathcal{M}_2$, and give a termination strategy for the loop control. Although there is very little work that pays attention on this topic, we still believe that our consideration of loop termination makes sense for synthesizing autonomously running life. 
\par This paper actually provides guidance for implementing calculation instructions and machine learning algorithms into biochemical environments, the oscillator model we design acts as a hub connecting various parts of reaction modules corresponding to specific calculation tasks. Oscillation, especially the relaxation oscillation, plays a crucial role in this process. Different from modeling and analyzing the oscillation phenomena observed in biochemical experiments, our work takes advantage of oscillation as a means to achieve specific functions. Our $4$-dimensional oscillator model solves the task of two-module regulation well, but is a little weak when faced with tasks of three or more modules. It will be the focus of our future work to find suitable oscillation structure and design corresponding chemical oscillator model for multi-module regulation.


\bibliographystyle{siamplain}
\bibliography{references}

\end{document}